\newtheorem{lemma}{Lemma}
\newtheorem{prop}{Proposition}
\newtheorem*{prop*}{Proposition}
\newtheorem{theorem}{Theorem}
\newtheorem*{theorem*}{Theorem}
\newtheorem*{Shmelkintheorem*}{Shmel'kin's theorem}
\theoremstyle{definition}
\newtheorem*{defin}{Definition}
\newtheorem{question}{Question}
\theoremstyle{remark}
\newtheorem*{rem}{Remark}
\let\tilde\widetilde
\let\hat\widehat
\begin{document}

\title{Infinite systems of equations
in abelian and nilpotent groups\footnote{
This work was supported by
the Russian Science Foundation, 
project no. 22-11-00075}
}
\author{Mikhail A. Mikheenko
\\{\small
Faculty of Mechanics and Mathematics
of Lomonosov Moscow State University
}
\\{\small
Moscow Center for
Fundamental and Applied Mathematics}\\
{\normalsize mamikheenko@mail.ru}
}
\date{}
\maketitle

\begin{abstract}
Every abelian (and even every nilpotent)
group contains a solution of
any finite unimodular system
of equations over itself.
However, this is not true for infinite systems.
We deduced a criterion
for a periodic abelian group
to contain a solution of
any infinite unimodular system
of equations over itself.
Using this criterion, we show
that nilpotent groups of bounded period
also contain solutions of
all infinite unimodular systems of equations
over themselves.
Solvability of every nonsingular
infinite system of equations
in each divisible nilpotent group
is shown as well.
\end{abstract}

\section{Introduction}

In this work $\mathbb Z_p$ for a prime number $p$ denotes
the field of order $p$. Here we refer to the exponent of a group $G$
as the period of $G$.

The present article is devoted to equations over groups.

\begin{defin}
Let $G$ be a group.
An equation in variables $x_1,\ldots,x_n$
over $G$ is an expression (possibly with coefficients from $G$)
$w(x_1,\ldots,x_n)=1$, where $w$ is an element of the free product
$G*F(x_1,\ldots,x_n)$,
in which $F(x_1,\ldots,x_n)$ is the free group
with basis $x_1,\ldots,x_n$.

The equation $w(x_1,\ldots,x_n)=1$ is {\it solvable} in the group
$\tilde G$ if $\tilde G \supset G$ and $\tilde G$ contains a
solution of this equation (i.e. there are elements
$\tilde g_1,\ldots,\tilde g_n \in \tilde G$ such that $w(\tilde g_1,\ldots, \tilde g_n)=1$).
The group $\tilde G$ is called a {\it solution group}.
Equivalently, $w=1$ is solvable in $\tilde G$
if there is a homomorphism $G*F(x_1,\ldots,x_n) \to \tilde G$
which is injective on $G$ and sends $w$ to $1$.

If $w=1$ is solvable in some group $\tilde G$,
we say that the equation $w=1$ over $G$ is {\it solvable}.

The solvability of a (finite or an infinite) system of equations
(possibly in an infinite set of variables)
over a group is defined likewise.
\end{defin}

The study of equations over groups has a long history
but continues to this day. For example, see
\cite{B80, B84, K06, KP95, M24, Sh67, ABA21, BE18, EH91, EH21, EdJu00, 
GR62, How81, IK00, K93, KM23, KMR24, KT17, G83, Le62, NT22, P08, Sh81, T18}.
There is a survey on equations over groups as well, see \cite{Ro12}.

Here we are interested in nonsingular systems of equations.

\begin{defin}
Let $\{w_1=1,\ldots,w_k=1\}$ be
a finite system of equations in variables
$\{x_1,\ldots,x_n\}$
over a group
$G$.
Take the integer matrix
$(a_{ij})$
of size $k \times n$,
where $a_{ij}$ is
the exponent sum of the variable
$x_j$ in the word $w_i$
(we also call this number
an exponent sum of the variable $x_j$
in the equation $w_i=1$).
For example, the exponent sum of $x$
in the equation
$x^{-1}gyx^2g^2x^{-3}y^{2}=1$
equals $-2$.

The system $\{w_1=1,\ldots,w_k=1\}$
is called {\it nonsingular}
if the rows of the matrix $(a_{ij})$
are linearly independent over $\mathbb Q$.
Equivalently, if these rows
are independent as elements of a $\mathbb Z$-module,
this is if no combination of these rows
is equal to zero, apart from the 
combination where every coefficient is zero.

Let $p$ be a prime number.
Let $\tilde a_{ij}$ be a residue
class of $a_{ij}$ modulo $p$.
The system $\{w_1=1,\ldots, w_k=1\}$
is called {\it $p$-nonsingular}
if the rows of the matrix $(\tilde a_{ij})$
are linearly independent over $\mathbb Z_p$.

Let $\pi$ be some set of prime numbers (maybe not of all prime numbers).
A finite nonsingular system of equations is called {\it $\pi$-nonsingular}
if it is $p$-nonsingular for every prime number $p\in\pi$.
Patricularly, $\varnothing$-nonsingularity of a system
is the same as its nonsingularity.

A finite system of equations is called {\it unimodular}
if it is $\Pi$-nonsingular, where $\Pi$ is a set of all prime numbers.

An infinite system of equations
is called {\it nonsingular}
({\it $p$-nonsingular}, {\it $\pi$-nonsingular} respectively),
if its every finite subsystem is nonsingular
($p$-nonsingular, $\pi$-nonsingular respectively).
\end{defin}

In particular, a single equation $w(x)=1$ in
a single variable is nonsingular
($p$-nonsingular, unimodular respectively)
if the exponent sum of $x$ in $w=1$
is not equal to zero
(is not divisible by $p$, is equal to $\pm 1$ respectively).

One can consider an integer matrix
of exponent sums of variable for
an infinite system $\{w_j = 1\}_{j \in J}$ of equations
in variables $\{x_i\}_{i \in I}$ over a group $G$ as well.
It is a matrix of size $I \times J$
with finitely supported rows,
i.e. rows which have only a finite number
of non-zero elements.
Clearly, a system of equations is nonsingular
if and only if the rows of the corresponding matrix
are linearly independent over $\mathbb Q$.
Analogously, the system is $p$-nonsingular
if and only if the rows containing the corresponding
residue classes of the elements of the matrix
are linearly independent over $\mathbb Z_p$.

If a system of equations over a group is $p$-nonsingular
for some prime number $p$
then it is also nonsingular,
as having a nontrivial combination
of integer rows over $\mathbb Z$
which is equal to zero implies that there is
a likewise combination in which
at least one coefficient is not divisible by $p$.
Particularly, unimodular systems of equations are nonsingular.
Nonsingularity of a finite system of equations
can be interpreted as existence of a non-zero
minor of rank $k$ in the matrix of
exponent sums of variables, where $k$ is the number of equations.
Similarly, $p$-nonsingularity can be seen
as existence of a likewise minor which is not divisible by $p$.
Hence, if a system of equations is finite, then
its nonsingularity implies its $p$-nonsingularity for some prime
number $p$.
This, however, is not true for infinite systems:
the system
$
\{x_p^p=1\},
$
where $p$ ranges over all prime numbers,
is nonsingular but is $p$-singular
for every prime number $p$.

Any nonsingular system of equations is solvable over
following types of groups:
\begin{itemize}
\item finite groups \cite{GR62};
\item locally residually finite groups
(follows from the previous item and some simple arguments);
\item hyperlinear groups (it is yet unknown if any group is such) \cite{P08};
\item locally indicable groups \cite{How81},
i.e. groups whose every nontrivial
finitely generated subgroup
admits an epimorphism onto the additive group $\mathbb Z$;
\item $p$-nonsingular systems are solvable over
locally $p$-indicable groups \cite{G83},
see also \cite{Kr85}.
A locally $p$-indicable group is a group whose every nontrivial
finitely generated subgroup
admits an epimorphism onto the additive group of $\mathbb Z_p$;
\item there is a conjecture (yet neither proved nor disproved)
that all nonsingular systems of equations
are solvable over every group \cite{How81}.
\end{itemize}

In fact, the solvability of a system
$\{w_i=1\}_{i\in I}$ of equations in variables $\{x_j\}_{j\in J}$ over a group $G$
is equivalent to the injectivity of the natural homomorphism
$$
G \to \left( G \ast F(X) \right)
/
\langle \langle
\{w_i\}_{i \in I}
\rangle \rangle,
$$
where $X=\{x_j\}_{j\in J}$.
This causes, in particular, the locality property:
the system of equations over a group is solvable
if and only if its
every finite subsystem is solvable.
That is why
it suffices to consider only finite systems to research solvability
of nonsingular equations over groups.

Consider a finite $\pi$-nonsingular system of equations over a group
and the matrix of the exponent sums of variables in these equations.
Suppose that there are $k$ equations and $n$ variables in the system and $k<n$
(i.e. the matrix of the system is not square).
Using transformations
(i.e. multiplying one equation by another
or by an inverse of another and rearranging equations)
of equations in the system
(which lead to transformations of rows
in the matrix)
and changes of variables
(which lead to transformations of columns in the matrix),
we can transform the matrix to a triangular 
matrix in which the first $k$ columns modulo $p$
are linearly independent over $\mathbb Z_p$
for each $p\in \pi$.
Therefore, this transformed system of equations
remains $\pi$-nonsingular
even after elimination of all variables except the first $k$ ones
(for example, the other variables can be changed to $1$).
Thus, any finite $\pi$-nonsingular system of equations
can be reduced to a likewise system
in which there are as many equations as there
are variables (i.e. the matrix of the system is square).
So it can be assumed for convenience
that finite nonsingular, $\pi$-nonsingular or unimodular
systems are ``square''.

One can also research solving
systems of equations in groups which are similar
to the initial group.
For example, one can try to solve a
system of equations over an abelian group
in an abelian group as well
(where the best case is being able to solve
the system in the initial group itself).
Let us start with finite systems.

If a group is finite, then every finite nonsingular system of equations
over this group is solvable in a finite group as well \cite{GR62}.
If a group is abelian, then every finite nonsingular system of equations over this group
is solvable in an abelian group, namely in the divisible hull of the group.
For example, one can solve the system by Cramer's rule
(having in mind that it is possible to divide
in a divisible abelian group, not necessarily uniquely).
Moreover, if a finite system of equations over an abelian group
is unimodular, then the system is solvable in this abelian group itself.
This is also true for all nilpotent groups:
it is a special case of Shmel'kin's theorem.
Before we formulate this theorem, let us remind a couple
of definitions.
\begin{defin}
Let $\pi$ be a set of some prime numbers.
A group $G$ is called {\it $\pi$-divisible}
if any element of $G$ is a $p$th power
of another element of $G$ for each prime $p \in \pi$.

Particularly, any group is $\varnothing$-divisible.
\end{defin}
\begin{defin}
Let $\pi$ be a set of some prime numbers.
A group $G$ is called {\it $\pi$-torsion free}
if it has no elements whose order is contained in $\pi$.

Particularly, any group is $\varnothing$-torsion free.
\end{defin}
Let us denote the complement of $\pi$ in the
set of all prime numbers by $\pi'$.
\begin{Shmelkintheorem*}[\!\!\cite{Sh67}]
Let $\pi$ be a set of some prime numbers.
Suppose that $G$ is a $\pi$-divisible
$\pi$-torsion-free
locally nilpotent group.
Then every square finite $\pi'$-nonsingular system of equations
over $G$ has a unique solution in $G$.
\end{Shmelkintheorem*}

It follows that every non-square finite $\pi'$-nonsingular system
of equations over $G$
has a solution (although not unique) in $G$
as well, as discussed above.

The case where a system of equations over a group
does not have solutions in groups from the same class, is also possible.
For instance, there is a unimodular equation over
a metabelian group such that the equation
is unsolvable in metabelian groups \cite{KMR24}.

The solution group of an infinite system of equations
can differ from solution groups of finite systems.
For example, an infinite nonsingular system of equations
over a finite group can be unsolvable in finite groups,
as shown by the system
$\{x_1^2=a,x_2^2=x_1,x_3^2 = x_2, \ldots \}$ over
$\langle a \rangle_2$, i.e. cyclic group of order $2$.
That is why for finding solutions of infinite
systems of equations in groups with ceratin properties
it is not enough to consider only finite subsystems.

The articles \cite{KMR24,M24} are concerned with
finding solutions of infinite systems of equations
over solvable groups in solvable groups as well.
\begin{theorem*}[\!\!\cite{KMR24}]
Suppose that a group $G$
has a subnormal series with abelian factors
$$
G=G_1 \triangleright G_2 \triangleright  \ldots \triangleright G_n \triangleright G_{n+1} =\{1\},
$$
in which all factors except the last one are torsion free.
Then any (including the infinite ones)
nonsingular system of equations over $G$
has a solution in some group $\tilde G$, which also
has a subnormal series with abelian factors
$$
\tilde G= \tilde G_1\triangleright  \tilde G_2
\triangleright \ldots \triangleright  \tilde G_{n}
\triangleright  \tilde G_{n+1}=\{1\},
$$
in which all factors except the last one are torsion free.
If $G_n$ is $\pi$-torsion free, then
$\tilde G$ can be chosen such that $\tilde G_n$
is $\pi$-torsion free as well.
\end{theorem*}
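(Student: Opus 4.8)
The plan is to induct on the length $n$ of the subnormal series. Write $N=G_2$, which is normal in $G$ (the first inclusion of a subnormal series is always normal in the whole group), and set $A=G/N$; by hypothesis $A$ is torsion-free abelian, while $N$ carries a subnormal series $N=G_2\triangleright\cdots\triangleright G_{n+1}=\{1\}$ of length $n-1$ all of whose factors are torsion free except the last one $G_n$. Thus I would solve the given system over $G$ in two stages: first modulo $N$, over a torsion-free abelian extension of $A$, and then inside an extension of $N$ supplied by the inductive hypothesis, finally assembling $\tilde G$ as a group extension $1\to\tilde N\to\tilde G\to\tilde A\to 1$.

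For the base case $n=1$ the group $G$ is abelian (possibly with torsion) and each equation becomes a linear relation $\sum_j a_{ij}x_j=c_i$ with $c_i\in G$. I would solve this in the divisible hull $D=D(G)$, which is injective as a $\mathbb Z$-module. Writing $\alpha\colon\mathbb Z^{(I)}\to\mathbb Z^{(J)}$, $e_i\mapsto(a_{ij})_j$, nonsingularity says exactly that $\alpha$ is injective, since a nonzero kernel element is precisely a nonzero $\mathbb Q$-dependence among the rows. A solution is the same as a homomorphism $x\in\mathrm{Hom}(\mathbb Z^{(J)},D)=D^{J}$ with $x\circ\alpha=c$, and such an $x$ exists precisely because $\alpha$ is injective and $D$ is injective. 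This argument is insensitive to the cardinalities of $I$ and $J$, so it disposes of infinite systems directly, and since $D(G)$ acquires no new $p$-torsion it is $\pi$-torsion free whenever $G$ is.

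For the inductive step I would first push the system into $A$: each $w_i=1$ yields $\sum_j a_{ij}\bar x_j=\bar c_i$ over the torsion-free abelian group $A$, which by the base case is solvable in a torsion-free abelian $\tilde A\supseteq A$ with chosen values $\bar x_j$. Choosing lifts $g_j\in\tilde G$ of $\bar x_j$ forces $w_i(g_1,g_2,\dots)\in\tilde N$, and adjusting the $g_j$ by unknowns $n_j$ from an extension of $N$ turns each ``$w_i=1$'' into an equation for the $n_j$. Linearizing (taking Fox derivatives, i.e. passing to the relevant section of $\tilde N$) replaces this by a system over $N$ whose coefficient matrix $(\widehat a_{ij})$ has entries in the group ring $\mathbb Z[\tilde A]$, where the integer $a_{ij}$ is deformed, via conjugation by the $g_j$, into an element of augmentation $\varepsilon(\widehat a_{ij})=a_{ij}$. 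The decisive point is that $\tilde A$ is torsion-free abelian, hence orderable, so $\mathbb Z[\tilde A]$ is an \emph{integral domain}: a nonzero $k\times k$ integer minor $d$ of the original matrix deforms to a minor $\widehat d\in\mathbb Z[\tilde A]$ with $\varepsilon(\widehat d)=d\neq 0$, whence $\widehat d\neq 0$ and the deformed system stays nonsingular over $\mathrm{Frac}(\mathbb Z[\tilde A])$, hence over $N$. By the inductive hypothesis it is then solvable in a group $\tilde N\supseteq N$ with a length-$(n-1)$ series of the required type, whose bottom factor $\tilde G_n$ is $\pi$-torsion free when $G_n$ is.

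The main obstacle is the assembly step. To realize the two partial solutions inside a single $\tilde G$ I must build the extension $1\to\tilde N\to\tilde G\to\tilde A\to 1$ so that it restricts to $1\to N\to G\to A\to 1$; this requires extending the conjugation action of $A$ on $N$ to an action of $\tilde A$ on $\tilde N$ (so that the conjugates ${}^{g_j}(\cdot)$ in the lifted equations are defined) and checking that the extension class of $G$ extends compatibly. I expect this to be carried out through a concrete model of $\tilde N$ as an $\tilde A$-group rather than by abstract obstruction theory, and to be the point where the torsion-free hypothesis on the upper factors, not merely nonsingularity, is genuinely used. A secondary technical difficulty is to make the linearization rigorous when $N$ is nonabelian: one must check that the residual relations form a bona fide nonsingular system of \emph{group} equations over $N$, to which the inductive hypothesis really applies, and not merely a system over the abelianization.
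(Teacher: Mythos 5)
This statement is background quoted from \cite{KMR24}; the present paper never proves it, so there is no in-paper proof to compare against. The only fragment of it that this paper does prove is the abelian case, Lemma~\ref{divisible}, and your base case is exactly that lemma: your argument via injectivity of divisible $\mathbb Z$-modules and the paper's argument via the direct-summand property of divisible subgroups are interchangeable, and both handle infinite systems and preserve $\pi$-torsion-freeness. The problem is the inductive step, where the two ``obstacles'' you defer are not secondary technicalities --- they are the entire mathematical content of the theorem, and as set up your induction cannot be completed.

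Concretely: first, the induction does not close. After the substitution $x_j=g_jn_j$, the residual equations are not group equations over $N$ in the sense of the statement being proved: the unknowns $n_j$ occur twisted by conjugation by lifts of elements of $\tilde A$, and the relevant nonsingularity is over $\mathbb Z[\tilde A]$, not over $\mathbb Z$. The inductive hypothesis concerns untwisted, integer-nonsingular systems, so it does not apply to this residual system; moreover, the solution group $\tilde N\supseteq N$ it supplies comes with no action of $\tilde A$ whatsoever, so the conjugations appearing in the residual equations are not even defined in $\tilde N$, and solved values could not be assembled into any extension. Closing the induction requires proving a stronger, action-equivariant statement (equations over groups with $\tilde A$-action; at abelian layers, linear systems over $\mathbb Z[\tilde A]$-modules, where your correct augmentation/domain observation only becomes useful once one can actually divide, e.g.\ after embedding $\mathbb Z[\tilde A]$ into a skew field). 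Second, the assembly is circular and unresolved: the lifts $g_j\in\tilde G$ are chosen before $\tilde G$ exists, and building $\tilde G$ means extending the conjugation action of $A$ on $N$ to an action of $\tilde A$ on $\tilde N$ together with a compatible extension class; nothing guarantees such an extension of actions exists for the $\tilde N$ handed to you by induction, and in general it does not. This is precisely why proofs of results of this kind (\cite{KMR24,M24}) work inside canonical ambient constructions --- semidirect products of group-ring modules, wreath-product-type embeddings, where the larger group and its action exist by construction --- rather than by enlarging the given extension $1\to N\to G\to A\to 1$. Your outline records a reasonable overall strategy and a correct base case, but it defers exactly the steps in which the theorem's difficulty lies.
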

\begin{theorem*}[\!\!\cite{M24}]
Suppose that a group $G$
has a subnormal series with abelian factors
$$
G=G_1 \triangleright G_2 \triangleright  \ldots \triangleright G_n \triangleright G_{n+1} =\{1\},
$$
in which all factors except the last one are torsion free.
Then $G$ embeds into a group $\hat G$, which:
\begin{itemize}
\item has a subnormal series with abelian factors
$$
\hat G= \hat G_1\triangleright  \hat G_2
\triangleright \ldots \triangleright  \hat G_{n}
\triangleright  \hat G_{n+1}=\{1\},
$$
in which all factors except the last one are torsion free;
\item contains a solution of any nonsingular
system of equations over itself. Particularly,
$\hat G$ contains a solution of every such system
over $G$.
\end{itemize}
If $G_n$ is $\pi$-torsion free, then
$\hat G_n$ is $\pi$-torsion free as well.
\end{theorem*}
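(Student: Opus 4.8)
The plan is to realize $\hat G$ as the union of a chain of structure-preserving extensions of $G$, built with the theorem of~\cite{KMR24} as a one-step tool, and then to prove that any group with the resulting ``divisible-factors'' structure already contains solutions of all nonsingular systems over itself, regardless of their cardinality.

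\textbf{Building $\hat G$.} First I would make every factor of the series divisible. For a prime $p$ and an element $g$ of a given term of the series, the equation $x^p=g$ is nonsingular (the exponent sum of $x$ equals $p\neq 0$); if $g$ lies in $\hat G_i$ and the factors above $\hat G_i$ are torsion free, then any solution $x$ automatically lies in $\hat G_i$, so adjoining it makes the $i$th factor $p$-divisible. Starting from $H_0=G$, at step $m+1$ I would collect the (set-sized) family of all such power equations over $H_m$ --- their disjoint union over disjoint variable sets is again nonsingular, since a finite subsystem has a block-diagonal exponent-sum matrix --- and apply~\cite{KMR24} to embed $H_m$ into a group $H_{m+1}$ that has a subnormal series of the same length $n+1$ with abelian factors, torsion free except the last, and that contains a solution of every such equation. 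Setting $\hat G=\bigcup_{m<\omega}H_m$ and $\hat G_i=\bigcup_m H_m^{(i)}$ termwise, all the listed properties pass to the directed union, and every factor of $\hat G$ becomes divisible because each $g\in\hat G$ already appears in some $H_m$ and acquires its roots by the next step. If $G_n$ is $\pi$-torsion free, I would invoke the corresponding clause of~\cite{KMR24} at each step, so that $\hat G_n$ stays $\pi$-torsion free in the limit; note that a divisible $\pi$-torsion-free abelian group (such as $\mathbb Q$) is still fully divisible, so these two requirements do not conflict.

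\textbf{Solving arbitrary systems.} It then remains to show, by induction on the length $n$ of the series, that a group $\hat G$ with abelian divisible factors (torsion free except the last) contains a solution of every nonsingular system $\{w_j=1\}_{j\in J}$ in variables $\{x_i\}_{i\in I}$ over itself. In the abelian base case the system reads $\sum_i a_{ji}x_i=b_j$ with the rows $r_j=(a_{ji})_i\in\mathbb Z^{(I)}$ linearly independent; they freely generate a subgroup $S\leq\mathbb Z^{(I)}$, so $r_j\mapsto b_j$ defines a homomorphism $S\to\hat G$, and since a divisible abelian group is an injective $\mathbb Z$-module this homomorphism extends to $\mathbb Z^{(I)}$, i.e.\ to an assignment $x_i\in\hat G$ solving the system. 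This is the step that defeats the cardinality problem: injectivity extends homomorphisms out of free subgroups of any rank, so no bound on $|I|$ or $|J|$ is needed. For the inductive step I would project the system to the top divisible abelian factor $\hat G/\hat G_2$, solve it there by the base case, lift the solution to elements $g_i\in\hat G$, and replace $g_i$ by $g_ic_i$ with $c_i\in\hat G_2$; reading the equations modulo $\hat G_3$ turns the corrections into a system over $\hat G_2/\hat G_3$ whose coefficient of $c_i$ in $w_j$ is again the exponent sum $a_{ji}$, so its matrix coincides with the original one and the system stays nonsingular. Recursing down the $n$ factors produces, after $n$ corrections, elements of $\hat G$ satisfying every $w_j=1$.

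\textbf{Main obstacle.} The routine points are the preservation of the series structure and of $\pi$-torsion-freeness under~\cite{KMR24} and under directed unions. The real work is the inductive step of the solving argument: one must check that passing to the next factor genuinely yields a system with the same exponent-sum matrix --- that the commutator and conjugation terms produced by the substitution $x_i\mapsto g_ic_i$ contribute nothing to the linear (abelianized) part in $\hat G_2/\hat G_3$, and that the resulting constant terms are well defined even when $I$ and $J$ are infinite. This is the Shmel'kin-type computation underlying the whole construction; once it is in place, the base case handles all cardinalities at once and the theorem follows.
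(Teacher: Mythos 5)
A preliminary remark: the paper does not actually prove this statement --- it is quoted from \cite{M24} as background --- so there is no internal proof to compare against; your argument has to stand on its own, and it does not.

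The fatal gap is the second step, the claim that any group with a subnormal series with \emph{divisible} abelian factors (torsion free except the last) contains a solution of every nonsingular system over itself. This is false, and the failure is exactly at the point you yourself flag as ``the Shmel'kin-type computation''. After the substitution $x_i \mapsto g_i c_i$, the corrections $c_i$ satisfy a system of \emph{module} equations: the coefficient of $c_i$ in the $j$th equation is an element of the group ring $\mathbb Z[\hat G/\hat G_2]$ (coming from conjugation by coefficients and by the $g_i$), and only its \emph{augmentation} equals the exponent sum $a_{ji}$. The conjugation action does not vanish modulo $\hat G_3$, so the matrix of the correction system is not the integer matrix $(a_{ji})$, and divisibility of $\hat G_2$ (injectivity as a $\mathbb Z$-module) is far too weak to solve it --- one would need invertibility of a group-ring matrix on the module. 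Concretely: let $Q$ be the additive group of rationals, $A=\mathbb Q[Q]$ its rational group algebra (a $\mathbb Q$-vector space, hence divisible and torsion free as an abelian group), and $\hat G = A \rtimes Q$ with $Q$ acting by multiplication by group elements. Then $\hat G \triangleright A \triangleright \{1\}$ has divisible torsion-free abelian factors, i.e. satisfies all your structural hypotheses. Take $t\in Q$ the element $1$, let $T\in\mathbb Q[Q]$ denote the corresponding invertible ring element, and let $g\in A$ be the identity of the ring $\mathbb Q[Q]$. The one-variable equation
$$
x\,(t^{-1}x^{-1}t)\,x = g
$$
is unimodular (exponent sum $1$). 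Projecting modulo $A$ into the abelian group $Q$ gives $\bar x = 0$, so any solution lies in $A$; for $c \in A$ the left-hand side equals $(2 - T^{-1})c$ in module notation, so a solution requires $g \in (2-T^{-1})\mathbb Q[Q]$. But group algebras of orderable groups have only trivial units, so $2-T^{-1}$ is not invertible and the ring identity is not in its image. Hence this unimodular equation has no solution in $\hat G$, refuting your key lemma.

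This is precisely the difference between the nilpotent results of the present paper (Theorems \ref{theorembound} and \ref{theoremdiv}), where corrections are sought in the \emph{center}, on which conjugation acts trivially, so the correction system genuinely has matrix $(a_{ji})$ and the abelian criterion applies, and the solvable situation of \cite{KMR24} and \cite{M24}, where the group-ring coefficients are unavoidable and the module equations can in general only be solved after \emph{enlarging} the group. That is why \cite{KMR24} produces a solution in a bigger group $\tilde G$ rather than in $G$, and why the group $\hat G$ of \cite{M24} must be built by iterating such genuine enlargements (solving ever larger systems), not merely by adjoining $p$th roots: as the example shows, no amount of divisibility of the factors closes the group under solutions. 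A secondary, repairable issue: your step 1 also uses that the series of the \cite{KMR24} solution group extends the original one ($G_i \subseteq \tilde G_i$), which the quoted statement does not assert.
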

\begin{theorem*}[\!\!\cite{M24}]
Suppose that a group $G$
has a subnormal series with abelian factors
$$
G=G_1 \triangleright G_2 \triangleright  \ldots \triangleright G_n \triangleright G_{n+1} =\{1\},
$$
in which all factors except the last one are $p'$-torsion free.
Then $G$ embeds into a group $\hat G$, which:
\begin{itemize}
\item has a subnormal series with abelian factors
$$
\hat G= \hat G_1\triangleright  \hat G_2
\triangleright \ldots \triangleright  \hat G_{n}
\triangleright  \hat G_{n+1}=\{1\},
$$
in which all factors except the last one are $p'$-torsion free;
\item contains a solution of any $p$-nonsingular
system of equations over itself. Particularly,
$\hat G$ contains a solution of every such system
over $G$.
\end{itemize}
If $G_n$ is $\pi$-torsion free, then
$\hat G_n$ is $\pi$-torsion free as well.
\end{theorem*}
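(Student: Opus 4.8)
The plan is to reduce the theorem to a single one-step embedding lemma and then to assemble $\hat G$ by a routine transfinite iteration. Call a group \emph{$n$-structured} if it carries a subnormal series of length $n$ with abelian factors whose first $n-1$ factors are $p'$-torsion-free. The lemma I would establish is: every $n$-structured group $H$ embeds into an $n$-structured group $H'$ in which a prescribed $p$-nonsingular system over $H$ (with possibly infinitely many equations and variables) has a solution, and if the bottom term $H_n$ is $\pi$-torsion-free then $H'_n$ may be chosen $\pi$-torsion-free. Granting the lemma, one builds a continuous chain $G = G^{(0)} \subseteq G^{(1)} \subseteq \cdots$, at each successor stage solving all $p$-nonsingular systems over the current group and taking unions at limit stages; since $n$-structured groups are closed under directed unions, the resulting $\hat G$ is $n$-structured and contains a solution of every $p$-nonsingular system over itself. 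This bookkeeping is routine, so the content lies in the lemma.

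I would prove the lemma by induction on $n$. For $n=1$ the group $H$ is abelian and, the unique (last) factor being unconstrained, any $p$-nonsingular system over $H$ is solvable in an abelian overgroup --- for instance the divisible hull, since a finite square system $\prod_j x_j^{a_{ij}} = g_i$ has $\det(a_{ij})$ coprime to $p$, hence nonzero, and is solved by division --- and the divisible hull preserves $\pi$-torsion-freeness. For the inductive step write $N = H_n$ (an abelian normal subgroup, a $\mathbb{Z}[Q]$-module under conjugation) and $Q = H/N$. The series of $Q$ has length $n-1$ and \emph{all} its factors, including the last one $H_{n-1}/H_n$, are $p'$-torsion-free. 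Applying the induction hypothesis to $Q$, together with the $\pi$-torsion-free clause for $\pi = p'$ on its last factor, embeds $Q$ into a group $\hat Q$ that is $p'$-torsion-free in every factor and solves all $p$-nonsingular systems over itself; in particular $\hat Q$ is locally $p$-indicable, every nontrivial finitely generated subgroup surjecting onto $\mathbb{Z}_p$.

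Now take the given system $w_i(x_1,\dots,x_k)=1$ over $H$, project its coefficients into $\hat Q$, and solve the projected system there by some $q_1,\dots,q_k \in \hat Q$. I would realise the lift as an extension $H' = \hat N \rtimes \hat Q$ with $\hat N$ abelian, seeking a solution of the shape $x_j = s_j v_j$ with $s_j$ a fixed preimage of $q_j$ and $v_j \in \hat N$. Reducing $w_i(x)=1$ modulo the abelian kernel recovers the already-solved equation in $\hat Q$, so $w_i$ lands in $\hat N$, and the Fox (Magnus) calculus turns the system into an affine $R$-linear one $D\mathbf v = \mathbf c$ over $R = \mathbb{Z}[\hat Q]$, where $D = \bigl(\partial w_i/\partial x_j\bigr)\big|_{q} \in M_k(R)$ and $\mathbf c \in \hat N^{k}$ is determined by the lifts and coefficients. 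The decisive feature is that the augmentation $\varepsilon \colon R \to \mathbb Z$ sends $D$ to the integer exponent-sum matrix $A = (a_{ij})$, which is $p$-nonsingular, so its finite minors are nonzero modulo $p$. I then set $\bar N = N \otimes_{\mathbb Z[Q]} R$ (which contains $N$, as $R$ is free and hence faithfully flat over $\mathbb Z[Q]$) and define $\hat N = (\bar N \oplus R^{k})/\langle D\mathbf e - \mathbf c\rangle$, freely adjoining a solution $\mathbf v$; by construction the system is solved in $H'$, whose series stacks the abelian factor $\hat N$ beneath the $p'$-torsion-free factors of $\hat Q$, so $H'$ is $n$-structured.

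The main obstacle is the injectivity $H \hookrightarrow H'$, which reduces to $N \hookrightarrow \hat N$. A direct computation shows that the kernel of $\bar N \to \hat N$ is carried by the left kernel $\{\mathbf r \in R^{k} : \mathbf r D = 0\}$, so it suffices that $D$ be left-injective over $R$. Here the argument departs essentially from the nonsingular, torsion-free case: there $\hat Q$ is locally indicable, hence right-orderable, so $R$ is a domain and $\det A \neq 0$ forces left-injectivity for free. In our setting $\hat Q$ has $p$-torsion, $R$ has zero-divisors, and $\det D$ may itself be a zero-divisor, so this shortcut fails. Instead I would pass to the localization $R_{(p)} = \mathbb Z_{(p)}[\hat Q]$; since $R$ is $\mathbb Z$-free the map $R \to R_{(p)}$ is injective, so left-injectivity of $D$ over $R$ follows once $D$ is \emph{left-invertible} over $R_{(p)}$. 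Proving this last point is the crux: using that the relevant minors of $A$ are units in $\mathbb Z_{(p)}$ together with the local $p$-indicability of $\hat Q$, I would filter $R_{(p)}$ by powers of the augmentation ideal, control the associated graded via the $p'$-torsion-free abelian factors, and thereby show $D$ invertible modulo the radical and lift this to an actual left inverse. This localized invertibility simultaneously yields the embedding and, with a slightly more careful choice of the adjoined module, the preservation of $\pi$-torsion-freeness of the bottom factor; everything else is routine.
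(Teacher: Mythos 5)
Your proposal must be judged against what a proof of this theorem has to establish, since the paper itself does not prove the statement --- it is quoted as background from \cite{M24}. Your overall skeleton (a one-step solution-adjoining lemma plus a chain/union argument, induction on the length of the series, lifting a solution from the quotient and linearizing the remaining system over a group ring, with the augmentation detecting $p$-nonsingularity) is the right kind of architecture. But there are two genuine gaps. The first is structural: you set $N=H_n$ and $Q=H/N$, calling $N$ ``an abelian normal subgroup.'' The series is only \emph{subnormal}: $H_n$ is normal in $H_{n-1}$, not in $H$, so for $n\geqslant 3$ the quotient $H/H_n$ does not exist and $N$ is not a $\mathbb{Z}[Q]$-module under conjugation; your induction does not start. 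There is no cheap fix (e.g.\ passing to the derived series destroys the torsion hypotheses: the Klein bottle group has a subnormal series with torsion-free abelian factors, yet its abelianization has $2$-torsion). This is precisely why arguments of this type first embed $G$, Krasner--Kaloujnine/Shmel'kin style, into an iterated wreath product of abelian groups, where the corresponding series consists of subgroups genuinely normal in the whole group, and only then run the abelian-kernel induction; your proposal never addresses the issue.

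The second gap is at what you yourself call the crux, and the route you sketch there aims at a false statement. You correctly reduce $N\hookrightarrow\hat N$ to showing that $D$, whose augmentation is $p$-nonsingular, has trivial left kernel over $R=\mathbb{Z}[\hat Q]$; in the torsion-free case this rests on the group ring being a domain (Higman), and in the present case, where $\hat Q$ may have $p$-torsion, it is exactly the hard content of \cite{M24}. Your plan is to deduce it from invertibility of $D$ over $R_{(p)}=\mathbb{Z}_{(p)}[\hat Q]$. That invertibility fails already in the simplest admissible case: for $\hat Q=\mathbb{Z}$ (a legitimate $\hat Q$ here, being abelian and $p'$-torsion-free) and a single equation with Fox derivative $D=(1+t-t^{2})$, the augmentation is $1$, yet $D$ is not invertible in $\mathbb{Z}_{(p)}[t,t^{-1}]$, whose units are only $ut^{n}$ with $u\in\mathbb{Z}_{(p)}^{\times}$ --- even though the desired conclusion (trivial kernel) holds there for the trivial reason that the ring is a domain. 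The fallback sketch does not repair this: the augmentation ideal is not the Jacobson radical, the augmentation-adic filtration of $\mathbb{Z}_{(p)}[\hat Q]$ need not be separated, and invertibility modulo an ideal lifts only in complete or nilpotent situations. So the key step is missing. Two smaller holes you wave at are also real: the quotient $(\bar N\oplus R^{k})/\langle D\mathbf{v}-\mathbf{c}\rangle$ can acquire torsion (adjoining a solution of $v^{q}=1$ with $q\neq p$ produces a $\mathbb{Z}/q\mathbb{Z}$ summand), so the $\pi$-torsion-freeness clause needs an actual argument, not a remark; and your limit stages need the one-step lemma to return a series compatible with the old one ($H_i'\cap H=H_i$), since the class of groups admitting such a series is not obviously closed under directed unions otherwise.
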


The present article studies solvability of
infinite systems of equations in abelian and nilpotent groups:
it is researched which abelian and nilpotent groups
contain solutions of all infinite unimodular
systems of equations over themselves
(having in mind that every nilpotent group
has a solution of every finite unimodular
system of equations over itself).
It turns out that not even all abelian groups
have this property. For instance, the group
$\mathbb Z_2 \oplus \mathbb Z_3 \oplus \mathbb Z_5 \oplus \ldots$
has no solutions of the system of equations (in the additive notation)
$$
\begin{cases}
x+ 2y_2= (1,0,0,\ldots)\\
x + 3y_3 = (0,1,0,\ldots)\\
x + 5y_5 = (0,0,1,\ldots)\\
\ldots
\end{cases}
$$

Indeed, suppose that the said group contains
a solution $\{\tilde x, \tilde y_2, \tilde y_3, \tilde y_5, \ldots\}$ of this system.
Then, as $\tilde x+ 2\tilde y_2= (1,0,0,\ldots)$
and the first coordinate of $2\tilde y_2$,
which is an element of $\mathbb Z_2$
multiplied by $2$,
equals $0$, we get that the first coordinate
of $\tilde x$ is not equal to $0$.
Also, since $\tilde x + 3\tilde y_3 = (0,1,0,\ldots)$
and the second coordinate of $3\tilde y_3$
is equal to $0$, as it is an element of $\mathbb Z_3$
multiplied by $3$, the second coordinate
of $\tilde x$ is not equal to $0$ as well.
Likewise we get that all the coordinates of $\tilde x$
are not equal to $0$. In other words, 
$\tilde x$ has an infinite number of coordinates not equal to $0$
and hence can not belong to $\mathbb Z_2 \oplus \mathbb Z_3 \oplus \mathbb Z_5 \oplus \ldots$.
Thus we arrive at a contradiction, so the group
$\mathbb Z_2 \oplus \mathbb Z_3 \oplus \mathbb Z_5 \oplus \ldots$
does not have a solution to the system above.

In Section \ref{AbCase}, we prove the following criterion.
\begin{theorem} \label{Criteria}
Suppose that $A$ is a periodic abelian group.
Then every unimodular system of equations over $A$ is solvable
in $A$ if and only if the reduced part of $A$ has bounded period.
\end{theorem}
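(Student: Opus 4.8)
The plan is to prove the two implications separately, reducing everything to the canonical decomposition $A=D\oplus R$ into the maximal divisible subgroup $D$ and a reduced complement $R$. The observation used in both directions is that a system with constants \emph{splits} along this decomposition: writing each constant $c_i=d_i+r_i$ with $d_i\in D$ and $r_i\in R$, a tuple $x_j=u_j+v_j$ solves $\sum_j a_{ij}x_j=c_i$ in $A$ if and only if $(u_j)$ solves $\sum_j a_{ij}u_j=d_i$ in $D$ and $(v_j)$ solves $\sum_j a_{ij}v_j=r_i$ in $R$. Both split systems have the same coefficient matrix, so both remain unimodular. Hence a unimodular system is solvable in $A$ iff it is solvable in $D$ and in $R$; in particular, to prove a system unsolvable in $A$ it suffices to prove it unsolvable in $R$.

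\emph{Sufficiency.} Assume $R$ has bounded period. I would invoke two standard facts from the theory of abelian groups: a divisible group is injective, hence algebraically compact (pure-injective); and a bounded group is algebraically compact as well, because a bounded pure subgroup is always a direct summand. Now take any unimodular system over $A$ and split it as above. Each finite subsystem is a finite unimodular system over an abelian group, hence (as recalled in the introduction, the case $\pi=\varnothing$ of Shmel'kin's theorem) is solvable in that abelian group; thus the system over $D$ is finitely solvable in $D$ and the one over $R$ is finitely solvable in $R$. Algebraic compactness upgrades finite solvability to genuine solvability, producing solutions in $D$ and in $R$, and therefore in $A$.

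\emph{Necessity.} Assume $R$ is unbounded; I must exhibit a unimodular system unsolvable in $R$. Since $R=\bigoplus_q R_q$ is reduced and unbounded, either (i) $R_q\neq 0$ for infinitely many primes $q$, or (ii) some $R_p$ is unbounded. In case (i) I mimic the example from the introduction: choose distinct primes $q_k$ with $R_{q_k}\neq 0$ and elements $g_k\in R_{q_k}\setminus q_kR_{q_k}$ (these exist, since a nonzero reduced $q_k$-group is not $q_k$-divisible), and take the equations $x+q_k y_k=g_k$. Reducing the $k$-th equation modulo $q_kR$ kills $q_ky_k$ and identifies $R/q_kR$ with $R_{q_k}/q_kR_{q_k}$, forcing the $q_k$-component of $x$ to be nonzero; a solution would then give $x$ infinitely many nonzero components, impossible in $\bigoplus_q R_q$. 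Unimodularity holds because deleting the column of $y_j$ leaves the minor $\pm\prod_{i\neq j}q_i$, which (the $q_i$ being distinct primes) can be chosen coprime to any prescribed prime.

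The main obstacle is case (ii), especially for \emph{complete} unbounded reduced $p$-groups (e.g. torsion-complete ones), where neither the "infinitely many primes" trick nor any Cauchy/convergence argument applies: in a torsion group the only way to block a solution is to force a variable to have infinite order. I plan to achieve this with a manifestly unimodular telescoping system. Fix a basic subgroup of $R_p$ and, using that it is unbounded, pick generators $b_k$ of distinct cyclic summands with $\mathrm{ord}(b_k)=p^{r_k}$ and $r_k$ growing so fast that $r_k-(k-1)\to\infty$; then consider $x_k=b_k+p\,x_{k+1}$ for $k\ge 1$. Each finite truncation has a triangular unit minor, so the system is unimodular, and from the first $n$ equations $x_1=t_n+p^n x_{n+1}$ with $t_n=\sum_{j=1}^{n}p^{\,j-1}b_j$. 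If $p^M\tilde x_1=0$ for a solution, I project onto a summand $\langle b_{k^*}\rangle$ (a direct summand, as a pure cyclic subgroup is a summand) with $r_{k^*}>M+k^*-1$ and pick $n>M+k^*-1$; this yields $p^{\,M+k^*-1}b_{k^*}\in p^n\langle b_{k^*}\rangle$, forcing the height $M+k^*-1$ to be at least $n$, a contradiction. Hence $\tilde x_1$ would need infinite order, impossible in the torsion group $R_p$. Carrying out this order/height bookkeeping, and verifying that the $b_k$ can be taken from independent summands of a basic subgroup, is the technical heart of the argument.
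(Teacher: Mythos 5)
Your proposal is correct, but it takes a genuinely different route from the paper's in both directions. For sufficiency, the paper argues elementarily: it splits the bounded reduced part into its $p$-components and, for a component of period $p^n$, inducts on $n$, the base case $n=1$ being linear algebra over $\mathbb Z_p$ (extend the independent reduced rows to a basis and define a linear map sending them to the constants); you instead invoke algebraic compactness (pure-injectivity) of bounded and of divisible groups to upgrade finite solvability --- which is the known finite unimodular case --- to global solvability. Your version is shorter but rests on Fuchs-level machinery; the paper's is self-contained and directly produces the refinement stated there as a remark, namely that $\{p_1,\ldots,p_k\}$-nonsingularity suffices when $p_1,\ldots,p_k$ are the primes dividing the period. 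For necessity, your case (i) (infinitely many primes) coincides with the paper's argument. In case (ii) (an unbounded $p$-component) the paper first passes to the first Ulm factor so as to assume there are no nonzero elements of infinite height, then uses separability to extract cyclic direct summands \emph{of the group itself} with rapidly growing orders and runs a telescoping system; you extract the cyclic summands from a basic subgroup and use purity plus boundedness to relate them to the whole group --- the same telescoping idea with a different source of summands. One step of yours needs care: writing $R_p=\langle b_{k^*}\rangle\oplus C$, an arbitrary complement $C$ need not contain the other generators $b_j$, so ``projecting onto $\langle b_{k^*}\rangle$'' does not by itself kill them, and your computation $\pi(t_n)=p^{\,k^*-1}b_{k^*}$ requires exactly that. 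The repair uses only facts you already cite: project onto the bounded pure --- hence direct --- summand $\bigoplus_{j\le n}\langle b_j\rangle$, which contains every generator occurring in $t_n$, and then project onto $\langle b_{k^*}\rangle$ inside it (alternatively, extend the coordinate projection $B\to\langle b_{k^*}\rangle$ to all of $R_p$ by pure-injectivity of the finite group $\langle b_{k^*}\rangle$). With that adjustment your height bookkeeping goes through verbatim.
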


Here the reduced part of an abelian group $A$
is any group $C$ such that $A = C \oplus D$
with $D$ divisible and $C$ reduced.
Such decomposition always exists, and in every
such decomposition $D$ is the largest divisible subgroup
of $A$ and $C$ is isomorphic to $A/D$,
so the reduced part of an abelian group
is unique up to isomorphism.

In Section \ref{NilpCase},
using this criterion and the methods of \cite{Sh67},
we prove the following result,
which is a partial generalization
of Shmel'kin's theorem to the infinite systems
of equations for nilpotent groups of bounded period.
\begin{theorem}\label{theorembound}
Suppose that $G$ is a nilpotent group of bounded period.
Then every (not even necessarily finite)
unimodular system of equations over $G$
is solvable in $G$ itself.
\end{theorem}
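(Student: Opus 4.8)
The plan is to induct on the nilpotency class $c$ of $G$. The base case $c=1$ is handled directly by Theorem \ref{Criteria}: a bounded-period abelian group has no nontrivial divisible subgroup (a divisible group whose exponent divides $m$ is trivial, since multiplication by $m$ is both surjective and zero), so it coincides with its own reduced part, which therefore again has bounded period. Hence Theorem \ref{Criteria} applies and every unimodular system over a bounded-period abelian group is solvable in the group itself.

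For the inductive step, suppose the assertion holds for all nilpotent groups of bounded period and class at most $c-1$, and let $G$ be nilpotent of class $c\ge 2$ with exponent dividing $m$. I set $Z=Z(G)$ and $\bar G=G/Z$; then $\bar G$ is nilpotent of class $c-1$ with exponent still dividing $m$, while $Z$ is abelian of bounded period. Let $\{w_j=1\}_{j\in J}$ be a unimodular system over $G$ in variables $\{x_i\}_{i\in I}$, and let $(a_{ij})$ be its exponent-sum matrix, where $a_{ij}$ denotes the exponent sum of $x_i$ in $w_j$. Reducing all coefficients modulo $Z$ produces a system $\{\bar w_j=1\}_{j\in J}$ over $\bar G$ with the very same matrix $(a_{ij})$; since unimodularity depends only on this integer matrix and is unchanged by the quotient, the reduced system is again unimodular. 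By the induction hypothesis it has a solution $\bar g_i\in\bar G$, and I lift each $\bar g_i$ to an arbitrary preimage $g_i\in G$.

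Because $\bar w_j(\bar g)=1$, each value $z_j:=w_j(g)$ lies in $Z$. I now correct the lift by central elements, substituting $x_i\mapsto g_i c_i$ with $c_i\in Z$. The key computation is that, as the $c_i$ are central in $G$ (and hence commute past the coefficients as well), substituting $g_i c_i$ for $x_i$ simply multiplies the value of each word by the $c_i$ raised to the corresponding exponent sums:
\[
w_j(g_1 c_1, g_2 c_2, \ldots) = w_j(g_1, g_2, \ldots)\prod_i c_i^{a_{ij}} = z_j \prod_i c_i^{a_{ij}}.
\]
Thus the elements $g_i c_i$ solve the original system exactly when, writing $Z$ additively, the $c_i$ satisfy the linear system $\sum_i a_{ij} c_i = -z_j$ for all $j\in J$. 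This is a system of equations over the abelian group $Z$ whose exponent-sum matrix is once more $(a_{ij})$, so it is unimodular; since $Z$ has bounded period, Theorem \ref{Criteria} furnishes a solution $c_i\in Z$. The elements $g_i c_i\in G$ then solve the original system, completing the induction.

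The conceptual point is that the bounded-period hypothesis is exactly what lets the abelian criterion carry the whole argument: it forces every centre arising in the induction to be a reduced abelian group of bounded period, so Theorem \ref{Criteria} is applicable at each stage, with the bounded-period condition playing the role that divisibility and torsion-freeness play in Shmel'kin's theorem. The step requiring the most care is the central-substitution identity displayed above—including verifying it in the presence of coefficients from $G$—together with the observation that unimodularity of the original system is inherited verbatim by both the quotient system over $\bar G$ and the correction system over $Z$. I expect no genuine obstacle beyond this bookkeeping, since all the analytic difficulty has already been absorbed into Theorem \ref{Criteria}.
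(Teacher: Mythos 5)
Your proof is correct and follows essentially the same route as the paper's: induction on the nilpotency class, passing to the quotient $G/Z(G)$ (which preserves the exponent-sum matrix and hence unimodularity), and then reducing to a unimodular system over the bounded-period abelian group $Z(G)$, which Theorem \ref{Criteria} solves. Your ``lift and correct by central elements'' formulation is just a rephrasing of the paper's ``substitute variables so the coefficient products land in $Z(G)$, then search for a central solution,'' so there is no substantive difference.
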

Also, in this section we prove the same generalization
of Shmel'kin's theorem for divisible nilpotent groups
(not even necessarily torsion free).
\begin{theorem}\label{theoremdiv}
Suppose that $G$ is a divisible nilpotent group.
Then every (not even necessarily finite)
nonsingular system of equations over $G$
is solvable in $G$ itself.
\end{theorem}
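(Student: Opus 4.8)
The plan is to reduce the nilpotent statement to its abelian special case by induction on the nilpotency class, and to settle the abelian case by recognizing that divisible abelian groups are exactly the injective $\mathbb{Z}$-modules. First I would record the base case: if $A$ is a divisible abelian group, then every nonsingular system over $A$ is solvable in $A$. Writing the system additively as $\sum_{j} a_{ij}\tilde x_j = b_i$ (with $i \in I$ and $b_i \in A$), the matrix of exponent sums $(a_{ij})$, whose rows are finitely supported, defines a homomorphism $\alpha \colon \mathbb{Z}^{(I)} \to \mathbb{Z}^{(J)}$, $e_i \mapsto \sum_j a_{ij} e_j$, while the coefficients define $\beta \colon \mathbb{Z}^{(I)} \to A$, $e_i \mapsto b_i$. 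Solving the system in $A$ is exactly finding $\phi \colon \mathbb{Z}^{(J)} \to A$ with $\phi\alpha = \beta$ (then one sets $\tilde x_j = \phi(e_j)$). Nonsingularity says the rows are linearly independent over $\mathbb{Q}$, so no nontrivial integer relation among them holds, hence $\alpha$ is injective; and $A$, being divisible, is an injective $\mathbb{Z}$-module. Therefore $\beta$ extends along the injection $\alpha$, producing the desired $\phi$. This handles arbitrary, in particular infinite, index sets at once.

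For the inductive step I would pass to the lower central series. Let $G$ have class $c$ and set $Z = \gamma_c(G)$, a central subgroup with $G/Z$ divisible nilpotent of class $< c$ (a quotient of a divisible group is divisible). Given a nonsingular system $\{w_i = 1\}$ over $G$, its image over $G/Z$ has the same exponent-sum matrix, so it is nonsingular and, by the induction hypothesis, has a solution $\bar g_j \in G/Z$. Lifting to $g_j \in G$, each $w_i(g_1, g_2, \dots)$ lies in $Z$; put $z_i = w_i(g_1, g_2, \dots)^{-1} \in Z$. I would then seek a correction by central elements, replacing $x_j$ by $g_j t_j$ with $t_j \in Z$. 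Because $Z$ is central, the factors $t_j$ collect with total exponent equal to the exponent sum $a_{ij}$, giving $w_i(g_1 t_1, g_2 t_2, \dots) = w_i(g_1, g_2, \dots)\prod_j t_j^{a_{ij}}$; so the problem becomes the abelian system $\sum_j a_{ij} t_j = z_i$ over $Z$, carrying the same (nonsingular) matrix.

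The step that makes this go through is that $Z = \gamma_c(G)$ is itself divisible, so that the base case applies to it, and I expect this structural fact to be the main point requiring care. I would prove that every factor $\gamma_i(G)/\gamma_{i+1}(G)$ of the lower central series of a divisible nilpotent group is divisible, by induction on $i$ using the surjective biadditive commutator map $(G/\gamma_2(G)) \times (\gamma_i(G)/\gamma_{i+1}(G)) \to \gamma_{i+1}(G)/\gamma_{i+2}(G)$ together with divisibility of the abelianization $G/\gamma_2(G)$; since $\gamma_{c+1}(G) = 1$, the case $i = c$ yields divisibility of $Z = \gamma_c(G)$. With $Z$ divisible, the abelian base case solves $\sum_j a_{ij} t_j = z_i$ in $Z$, and the corrected values $g_j t_j \in G$ solve the original system, completing the induction. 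The main obstacle is thus not the solvability mechanism, which is pure injectivity, but verifying that divisibility propagates down the lower central series so that the central subgroup used at each stage is genuinely divisible.
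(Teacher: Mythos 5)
Your proof is correct, and while its overall skeleton (induction on the nilpotency class, lifting a solution from a central quotient and correcting it by central elements, with the abelian base case handled by injectivity/divisibility of $A$) matches the paper's, you diverge from the paper at exactly the point you identified as delicate, and your route there is genuinely different. The paper works with the \emph{upper} central series: its inductive step quotients by $Z(G)$ and therefore needs the fact that the center (indeed every term $Z_i(G)$) of a divisible nilpotent group is divisible; this is its Lemma~\ref{divisiblenilp}, proved by a careful manipulation of commutator identities showing that any $n$th root $w$ of a central element descends step by step down the upper central series. You instead quotient by $Z=\gamma_c(G)$, the last nontrivial term of the \emph{lower} central series, and prove its divisibility via the standard graded-commutator argument: the induced biadditive map $(G/\gamma_2(G)) \times (\gamma_i(G)/\gamma_{i+1}(G)) \to \gamma_{i+1}(G)/\gamma_{i+2}(G)$ plus divisibility of the abelianization forces each lower central factor to be divisible. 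One small imprecision: this induced map need not be surjective; what is true, and all you need, is that its image \emph{generates} $\gamma_{i+1}(G)/\gamma_{i+2}(G)$ --- since the $n$-divisible elements of an abelian group $A$ form the subgroup $nA$, putting all generators in $nA$ already gives $A=nA$. (Also, divisibility in the first slot alone suffices, so the induction on $i$ is not really needed.) The trade-off: your lemma is easier and more standard, avoiding the paper's most delicate computation; the paper's Lemma~\ref{divisiblenilp} is stronger and of independent interest (all upper central terms are isolated, and the argument localizes to $\pi$-divisibility), which the paper exploits in its remarks.
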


In Section \ref{OpenQuest}, we formulate some open questions
on solvability of infinite systems of equations in groups.

The author thanks the Theoretical Physics and Mathematics
Advancement Foundation ``BASIS''.
The author also thanks Anton Klyachko
for valuable remarks, the mentorship
and for the example from Question \ref{torsionfreebad}.
The author thanks an anonymous referee
for useful comments.

\section{Abelian groups} \label{AbCase}

In this section, we look for solutions of systems of equations
over an abelian group in an abelian group.
Hence, if we rearrange some coefficients and variables
in equations of a system, we get an equivalent
(for our purpose) system.
So, we can assume that variables commute with each other and with coefficients.
In this case we can use the additive notation
instead of the multiplicative one.
Thus, in this section we may assume that
each equation in a system has the form
$k_1x_1 + \ldots + k_nx_n = a$ for some $n\in \mathbb N$
(not necessarily the same $n$ for all equations),
where $a$ is the coefficient of the equation, $x_i$ are
variables and $k_i$ are integer numbers.
The other systems of equations are equivalent to these
(in the sense of searching for solutions in abelian groups).
In this case $k_i$ is the exponent sum of
$x_i$ in the equation.
In other words, the left hand sides of such equations
are the rows of exponent sums of variables.

Each abelian groups is a direct sum of a divisible
abelian group and a reduced abelian group.
Any nonsingular system of equations over a divisible
abelian group $A$ has a solution in $A$ itself,
as the following lemma shows.

\begin{lemma}\label{divisible}
Suppose that $A$ is a divisible abelian group.
Then every (including the infinite ones)
nonsingular system of equations over $A$
is solvable in $A$ itself.
\end{lemma}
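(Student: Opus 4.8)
The plan is to reformulate the problem as an extension problem for homomorphisms and then exploit the fact that a divisible abelian group is an injective $\mathbb{Z}$-module. Writing the system in additive form $\sum_{j \in J} a_{ij} x_j = c_i$ for $i \in I$, the coefficients $c_i$ lie in $A$ and each row $(a_{ij})_{j \in J}$ is finitely supported. Let $F = \bigoplus_{j \in J} \mathbb{Z} e_j$ be the free abelian group on the set of variables, and for each $i$ let $r_i = \sum_j a_{ij} e_j \in F$ be the corresponding row, viewed as an element of $F$ (well-defined, since the row is finitely supported). A choice of values $\tilde x_j \in A$ for the variables is the same datum as a homomorphism $f \colon F \to A$ with $f(e_j) = \tilde x_j$, and under this correspondence the tuple $(\tilde x_j)$ solves the system if and only if $f(r_i) = c_i$ for every $i \in I$. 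So it suffices to construct a homomorphism $f \colon F \to A$ sending each $r_i$ to $c_i$.

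First I would produce such an $f$ on the subgroup $N = \langle r_i : i \in I \rangle \le F$. By nonsingularity the family $\{r_i\}_{i \in I}$ is linearly independent over $\mathbb{Q}$, and since any nontrivial integral relation among the $r_i$ would also be a nontrivial rational one, the family is linearly independent over $\mathbb{Z}$ as well. Hence $N$ is free abelian with basis $\{r_i\}_{i \in I}$, and the assignment $r_i \mapsto c_i$ extends uniquely to a well-defined homomorphism $f_0 \colon N \to A$.

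Then I would extend $f_0$ from $N$ to all of $F$. This is exactly where divisibility enters: a divisible abelian group is an injective object in the category of abelian groups (Baer's criterion over the principal ideal domain $\mathbb{Z}$), so $f_0 \colon N \to A$ extends along the inclusion $N \hookrightarrow F$ to a homomorphism $f \colon F \to A$. Setting $\tilde x_j = f(e_j)$ then yields a solution of the whole system in $A$, which completes the argument.

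The proof carries essentially no computational content; the only point requiring care is the translation in the first paragraph between solution tuples and homomorphisms out of $F$, keeping track of the fact that the index sets $I$ and $J$ may be infinite while each row is finitely supported, so that each $f(r_i)$ is a genuine finite sum. The substantive content is conceptual rather than an obstacle: nonsingularity is precisely the condition guaranteeing that the partial assignment on $N$ is consistent (there are no relations among the $r_i$ to be respected), while divisibility is precisely the condition guaranteeing that such a consistent partial solution always extends to a full one.
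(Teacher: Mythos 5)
Your proof is correct and is essentially the paper's own argument in dual packaging: the paper forms the quotient $\left( A \oplus \sum_j \mathbb{Z}\cdot x_j \right) / \langle \{ m_i - a_i \} \rangle$ (which is precisely the pushout of your inclusion $N \hookrightarrow F$ along $f_0$), embeds $A$ into it using the same $\mathbb{Z}$-independence of the rows, and then uses divisibility to split this embedding and project the $x_j$ onto $A$ --- exactly the retraction that your injectivity-based extension produces. Both arguments rest on the same two points: nonsingularity makes the assignment $r_i \mapsto c_i$ well-defined, and divisibility of $A$ (injectivity as a $\mathbb{Z}$-module) extends it to all of $F$.
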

\begin{proof}
Let $\{m_i = a_i\}_{i \in I}$ be
a nonsingular system of equations over $A$,
where $a_i\in A$, while $m_i$ is an element of
the $\mathbb Z$-module
$\sum_{j \in J} \mathbb Z \cdot x_j$,
which can be seen as an integer row. 

The natural mapping
$A \to \left( A\oplus \sum \mathbb Z \cdot x_j \right)
/
\langle \{
m_i - a_i
\} \rangle$
is an embedding because the rows $m_i$ are independent
as elements of $\sum \mathbb Z \cdot x_j$.
As $A$ is a divisible abelian group,
it is a direct summand of
$\left( A\oplus \sum \mathbb Z \cdot x_j \right)
/
\langle \{
m_i - a_i
\} \rangle$.
The images of the elements $x_j$
under the projection onto $A$
form the solution of
$\{m_i=a_i\}$
in $A$, as needed.
\end{proof}

That is why from now we consider only reduced abelian groups.

\subsection{Negative case}

Let us start with reduced abelian groups which
have no solutions of some unimodular systems of equations over themselves.

\begin{prop}\label{pbad}
Suppose that $A$ is a reduced abelian $p$-group.
Suppose that the period of $A$ is not bounded.
Then there is a unimodular system of equations over $A$,
which has no solutions in $A$.
\end{prop}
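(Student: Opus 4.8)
The plan is to exhibit an infinite bidiagonal (``chain'') system whose only candidate solution lives in the $p$-adic completion of $A$, where it has infinite order, and hence cannot belong to the torsion group $A$ --- all while keeping the system unimodular.

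First I would fix a basic subgroup $B=\bigoplus_{i}\langle d_i\rangle$ of $A$. Since $A$ is reduced, $B$ must be unbounded: a bounded basic subgroup is pure and bounded, hence a direct summand, which would force $A=B$ to be bounded, contrary to hypothesis. Using that the orders of the $d_i$ are unbounded, I would select generators $c_1,c_2,\ldots$ lying in pairwise distinct summands of $B$ with $o(c_n)=p^{m_n}$ and $m_n\ge 2n$.

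Next I would write down the system $p z_n-z_{n-1}=c_n$ for $n\ge 1$ in the variables $z_0,z_1,\ldots$. Its matrix of exponent sums is bidiagonal, with $-1$ on the $z_{n-1}$-diagonal and $p$ on the $z_n$-diagonal. Reducing a finite subsystem modulo $p$ leaves only the $-1$ entries, which sit in distinct columns and are therefore independent; modulo any other prime the matrix stays bidiagonal with nonzero entries, and over $\mathbb Z$ each successive equation involves a fresh variable. Hence every finite subsystem is $p$-nonsingular for all $p$, so the system is unimodular (and, being nonsingular, it is solvable over the divisible hull of $A$ by Lemma \ref{divisible}, as it must be). Unrolling the recursion gives $z_0+\sum_{k=1}^{n}p^{k-1}c_k=p^{n}z_n\in p^{n}A$ for every $n$.

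The hard part will be to rule out a solution in $A$ itself. For this I would pass to the $p$-adic completion $\hat A=\varprojlim A/p^{n}A$. Because $A/B$ is divisible, $B$ is dense in $A$, and purity gives $B\cap p^{n}A=p^{n}B$, so the two topologies agree and $\hat A\cong\hat B$. In $\hat A$ the partial sums $s_n=\sum_{k=1}^{n}p^{k-1}c_k$ converge to $\ell=\sum_{k\ge1}p^{k-1}c_k$; since $p^{k-1}c_k$ has order $p^{\,m_k-k+1}\ge p^{k+1}$ and the $c_k$ lie in independent summands, $\ell$ has infinite order. If the system had a solution with all $z_n\in A$, then $z_0+s_n\in p^{n}A$ for all $n$, and passing to the completion (where $\bigcap_n p^{n}\hat A=0$) would force the image of $z_0$ to equal $-\ell$; but the image of the torsion element $z_0$ is torsion, whereas $\ell$ is not --- a contradiction, so the system is unsolvable in $A$. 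The main obstacle is precisely this completion step: justifying $\hat A\cong\hat B$ from density and purity, proving that $\ell$ genuinely has infinite order, and checking that the argument survives the possible failure of $\bigcap_n p^{n}A=0$ that is allowed for a reduced group.
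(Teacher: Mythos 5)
Your proof is correct, but it takes a genuinely different route from the paper's. Both arguments run on the same engine: a bidiagonal unimodular chain system whose unrolled equations $z_0+\sum_{k=1}^{n}p^{k-1}c_k\in p^nA$ force a torsion element to simulate an element of infinite order. The difference is where the elements $c_k$ come from and where the contradiction is derived. The paper first passes to the first Ulm factor $A/A^1$ (solvability of all unimodular systems in $A$ is inherited by quotients), so that it may assume $A$ has no nonzero elements of infinite height; separability then provides cyclic direct summands of $A$ \emph{itself} of rapidly growing orders, and the contradiction is obtained inside $A$ by elementary order-counting in the decompositions $A=\langle a_1\rangle\oplus\dots\oplus\langle a_i\rangle\oplus B_i$. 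You instead take the $c_k$ in a basic subgroup $B$ (correctly noting that $B$ is unbounded: a bounded basic subgroup is a bounded pure subgroup, hence a direct summand whose complement $\cong A/B$ is divisible, which is impossible in a reduced unbounded group) and derive the contradiction in the $p$-adic completion $\hat A\cong\hat B$, where the forced value $-\ell$ of $z_0$ is non-torsion. The paper's route keeps the final contradiction elementary (no completions) at the price of the Ulm-factor reduction and the separability theorem; your route needs no reduction at all, since basic subgroups always exist, at the price of completion machinery: the identification $B/p^nB\cong A/p^nA$ (purity plus density), the Hausdorff property $\bigcap_n p^n\hat A\subseteq\bigcap_n\ker\bigl(\hat A\to A/p^nA\bigr)=0$, and the verification via the coordinate projections $\hat B\to\langle c_k\rangle$ that $\ell$ is non-torsion (its projection to $\langle c_k\rangle$ is $p^{k-1}c_k$, of order at least $p^{k+1}$). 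All three checks go through as you indicate; in fact you can skip the limit argument altogether, since the congruences $-z_0\equiv\sum_{k=1}^{n}p^{k-1}c_k\pmod{p^nA}$ say precisely that the image of $-z_0$ in $\hat A$ is the coherent sequence defining $\ell$, and torsion versus non-torsion finishes the proof immediately.
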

\begin{proof}
Let us recall that the height of an element $a$
in an abelian $p$-group is the maximal number $k$,
such that the equation $p^kx=a$
is solvable in this group, or infinity if there is no maximal number
among such numbers $k$.

Note that $A$ can be assumed to have no
elements of infinite height aside from $0$.
Indeed, if $A$ contains a solution of every unimodular system of equations
over itself, then this fact is also true for any quotient group of $A$.
In particular, it is true for the first Ulm factor $\bar A^0$ of $A$,
i.e. quotient group of $A$ by the group $A^1$
consisting of elements of $A$ which have infinite height.

If $A$ has no elements of infinite height (except $0$),
then $A^1=\{0\}$ and $A \cong \bar A^0$.
And if $A^1\neq \{0\}$, then note that in this case
$\bar A^0$ has unbounded period as well
and has no elements of infinite height aside from $0$ \cite[\S 27]{Kurosh67}.

Therefore, we can assume that
$A$ is an abelian $p$-group of unbounded period
and without non-zero elements of infinite height
(notice that the last property is stronger than the property of being reduced).
Now we show that for every natural number $N$
the group $A$ has a cyclic direct summand of order not less than $p^N$.

Indeed, take an arbitrary element
$a\in A$ of order $p^N$.
The group $A$ is separable, as it has no elements of infinite height
\cite[\S 65]{Fuchs73},
hence $a$ lies in a direct summand of
$A$ which is a direct sum of some cyclic groups.
Since the order of $a$ is $p^N$,
at least one of these cyclic groups has order not less than $p^N$.

$A$ has a cyclic direct summand
$\langle a_1 \rangle_{p^{k_1}}$
of order not less than $p$:
$$
A = \langle a_1 \rangle_{p^{k_1}} \oplus B_1.
$$

Now note that $B_1$ is also an abelian
$p$-group of unbounded period
without non-zero elements of infinite height.
It has a cyclic direct summand of order not less than $p^{2k_1+1}$.
We get the following decomposition of $A$:
$$
A = \langle a_1 \rangle_{p^{k_1}} \oplus \langle a_2 \rangle_{p^{k_2}} \oplus B_2,
$$
where $k_2 > 2k_1$. Similarly,
$B_2$ has a direct summand $\langle a_3 \rangle_{p^{k_3}}$
such that $k_3 > 2k_2$
and so on.

On the $n$th step we get the following decomposition of $A$:
$$
A = \langle a_1 \rangle_{p^{k_1}} \oplus \ldots \oplus \langle a_n \rangle_{p^{k_n}} \oplus B_n,
$$
where $k_{i+1} > 2 k_{i}$
for each $i$ from $1$ to $n$.

Consider the following system of equations over $A$:
$$
\{x_i -p^{k_{i} - k_{i-1}}x_{i+1} = a_i \mid i \in \mathbb N \},
$$
where $k_0=0$.
This system is unimodular, as can be seen from the matrix
of exponent sums of the variables in the equations:
$$
\begin{pmatrix}
1 & -p^{k_1} & 0 & \dots \\
0 & 1 & -p^{k_2-k_1} & \dots \\
0 & 0 & 1 & \dots \\
\dots & \dots & \dots & \dots \\
\end{pmatrix}
$$

Suppose that this system has a solution $\{\tilde x_i\}$.
Then the following equalities hold:
$$
\tilde x_1 - p^{k_i}\tilde x_{i+1}= a_1 + p^{k_1}a_2 + \ldots+ p^{k_{i-1}}a_{i}.
$$
Take a look at the $i$th of these equations in the following decomposition of $A$:
$$
A = \langle a_1 \rangle_{p^{k_1}} \oplus \ldots \oplus \langle a_i \rangle_{p^{k_i}} \oplus B_i.
$$
In the component $\langle a_i \rangle_{p^{k_i}}$ the element $p^{k_i}\tilde x_{i+1}$
is equal to zero while $p^{k_{i-1}}a_{i}$ has order
$p^{k_i - k_{i-1}} > p^{k_{i-1}}$.
So for any $j \in \mathbb N$ the order of $\tilde x_1$
is not less than $p^{k_j}$.
Now note that
$$
k_j > 2k_{j-1} > \ldots > 2^{j-1}k_1 \geqslant 2^{j-1}.
$$
It means that the order of $\tilde x_1$ is not less than $p^{2^{j-1}}$ for every $j$.
It is possible only if $\tilde x_1$ has infinite order,
but $A$ has no elements of infinite order, so
the assumption of having a solution of the system in $A$
is false.

As a result, we have a unimodular system of equations
over $A$ which has no solutions in $A$, as needed.
\end{proof}

Now let us move from $p$-groups to the
other groups of unbounded period.

\begin{lemma}\label{bad}
Suppose that $A$ is a reduced periodic abelian group
of unbounded period.
Then there is a unimodular system of equations
over $A$
which has no solutions in $A$.
\end{lemma}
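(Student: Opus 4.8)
The plan is to reduce everything to the primary decomposition $A=\bigoplus_{p}A_{p}$ of the periodic group $A$ into its $p$-components and then split into two cases according to why the period is unbounded. A periodic abelian group has bounded period precisely when only finitely many $A_{p}$ are nonzero and each of these has bounded period; hence the hypothesis forces a genuine dichotomy: either \textbf{(a)} some component $A_{p}$ has unbounded period, or \textbf{(b)} every $A_{p}$ has bounded period, in which case unboundedness of $A$ makes infinitely many of the $A_{p}$ nonzero. In each case I will exhibit a unimodular system with no solution in $A$.

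In case (a) I would invoke Proposition~\ref{pbad} directly. Since $A$ is reduced, so is each $A_{p}$ (a divisible subgroup of $A_{p}$ would be a divisible subgroup of $A$), so $A_{p}$ is a reduced abelian $p$-group of unbounded period, and Proposition~\ref{pbad} supplies a unimodular system $S$ over $A_{p}$ with no solution in $A_{p}$. Viewed as a system over $A$, the system $S$ is still unimodular, since unimodularity depends only on the matrix of exponent sums. If $S$ had a solution in $A$, then applying the canonical projection $\pi\colon A\to A_{p}$ (a homomorphism that restricts to the identity on $A_{p}$, hence fixes the coefficients of $S$) would produce a solution in $A_{p}$, a contradiction.

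In case (b) I would generalize the example from the introduction. Fix distinct primes $p_{1},p_{2},\ldots$ with $A_{p_{i}}\neq\{0\}$, choose a nonzero $a_{i}\in A_{p_{i}}$, and let $p_{i}^{e_{i}}$ be the finite period of $A_{p_{i}}$. Consider the system $\{\,x+p_{i}^{e_{i}}y_{i}=a_{i}\mid i\in\mathbb N\,\}$ in the variables $x,y_{1},y_{2},\ldots$. Projecting the $i$th equation onto the component $A_{p_{i}}$ annihilates the summand $p_{i}^{e_{i}}y_{i}$, so the $p_{i}$-component of any solution value $\tilde x$ must equal $a_{i}\neq 0$; as this holds for every $i$, the element $\tilde x$ would have nonzero components at infinitely many primes, which is impossible in $\bigoplus_{p}A_{p}$. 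Hence the system has no solution in $A$.

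It remains to check unimodularity, which I expect to be the main point requiring care. For a finite subsystem indexed by $i_{1},\ldots,i_{k}$ the relevant matrix is $k\times(k+1)$: the column of $x$ is all ones, and each $y_{i_{j}}$ contributes the single entry $p_{i_{j}}^{e_{i_{j}}}$. Deleting the $x$-column gives the diagonal minor $\prod_{j}p_{i_{j}}^{e_{i_{j}}}$, which is coprime to every prime not among $p_{i_{1}},\ldots,p_{i_{k}}$; deleting instead the column $y_{i_{m}}$ gives, up to sign, the minor $\prod_{j\neq m}p_{i_{j}}^{e_{i_{j}}}$, which is not divisible by $p_{i_{m}}$. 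Thus for every prime some maximal minor is nonzero modulo that prime, so each finite subsystem is $p$-nonsingular for all $p$, i.e.\ unimodular. The only real obstacle is this minor bookkeeping together with keeping the case dichotomy genuinely exhaustive; the non-solvability arguments themselves are short once the systems are written down.
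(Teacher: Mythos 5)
Your proposal is correct and takes essentially the same approach as the paper: decompose into primary components, handle an unbounded component via Proposition~\ref{pbad} and the retraction $A\to A_p$, and otherwise build an infinite system $\{x+(\text{coefficient})\,y_i=a_i\}$ whose solvability would force $\tilde x$ to have nonzero entries in infinitely many primary components. The only cosmetic difference is that the paper phrases the dichotomy as finitely versus infinitely many nonzero components and uses coefficient $p_i$ with $a_i\notin p_iA_{p_i}$ (which works even if components have unbounded period), whereas you use $p_i^{e_i}$ with an arbitrary nonzero $a_i$, which your version of the dichotomy makes available.
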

\begin{proof}
The periodic abelian group $A$ is a direct sum
of its $p$-components:
$$
A = \bigoplus_{i \in I} A_{p_i}.
$$
First consider the case where $I$ is finite.
Then some component $A_p$ has unbounded period
(otherwise the period of $A$ is bounded).
If any unimodular system of equations over $A$ has a solution in $A$,
then this is true for any quotient group of $A$.
However, Proposition \ref{pbad} shows that
for $A_p$ this fact is not true.
So, not all unimodular systems of equations over $A$
are solvable in $A$. One can even give an example
of a system which has no solutions in $A$: if the projection
of the system onto $A_p$ is the system from the proof of Proposition \ref{pbad},
then the system itself is unsolvable in $A$.

Now suppose that $I$ is infinite (in this case we can assume $I = \mathbb N$,
although the numbers $p_i$ do not have to range over all prime numbers).
In each $A_{p_i}$ pick an element
$a_i \in A_{p_i}\setminus p_iA_{p_i}$, which exists
since the abelian $p_i$-group $A_{p_i}$ is not divisible.
Consider the following system of equations:
$$
\{x + p_i y_i = a_i \mid i \in \mathbb N \}.
$$
It has the following matrix of exponent sums:
$$
\begin{pmatrix}
1 & p_1 & 0 & 0 &\dots \\
1 & 0 & p_2 & 0 &\dots \\
1 & 0 & 0 & p_3 & \dots \\
\dots & \dots & \dots & \dots & \dots \\
\end{pmatrix}
$$
Clearly, the system is $p_i$-nonsingular
for every $p_i$ and $p$-nonsingular
for the remaining prime numbers $p$, so this system is unimodular.

Suppose that this system has a solution $\{\tilde x, \tilde y_1, \tilde y_2, \ldots\}$
in $A$. It means that this solution lies in the Cartesian product
$\prod\limits_{i \in \mathbb N} A_{p_i}$ of $p$-components.
Look at the component $A_{p_i}$ and the equation $\tilde x + p_i \tilde y_i = a_i$.
As $a_i \notin p_i A_{p_i}$, $p_i$-component of $\tilde x$
is not $0$ (otherwise $a_i$ is equal to $p_i \hat y_i \in p_iA_{p_i}$, where
$\hat y_i$ is the $p_i$-component of $\tilde y_i$).
So we get that every component of $\tilde x\in\prod\limits_{i \in \mathbb N} A_{p_i}$
is not equal to $0$.
As there is an infinite number of such components in $\prod\limits_{i \in \mathbb N} A_{p_i}$,
$\tilde x$ does not lie in the direct sum of the components, which means $\tilde x \notin A$.
We get a contradiction, so the assumption of finding a solution
of the system in $A$ is false.
\end{proof}

\subsection{Positive case}

Now we look at groups which contain solutions
to all unimodular systems of equations over themselves.

\begin{prop}\label{pvecgood}
Any 
$p$-nonsingular system of equations
over an abelian group $A$
of prime period $p$ has a solution in $A$ itself.
\end{prop}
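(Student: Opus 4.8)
The plan is to exploit the fact that an abelian group $A$ of prime period $p$ is nothing but a vector space over the field $\mathbb Z_p$, so the whole problem reduces to linear algebra over $\mathbb Z_p$. First I would reduce all exponent sums modulo $p$. Writing the system additively as $\{m_i=a_i\}_{i\in I}$, where $a_i\in A$ and $m_i\in\sum_{j\in J}\mathbb Z\cdot x_j$ is the (finitely supported) row of exponent sums, I note that since $pa=0$ for every $a\in A$, the value of $m_i$ on a tuple $\{\tilde x_j\}$ with $\tilde x_j\in A$ depends only on the residues modulo $p$ of the entries of $m_i$. Hence each $m_i$ may be replaced by its reduction $\bar m_i$, which lives in the $\mathbb Z_p$-vector space $V$ with basis $\{x_j\}_{j\in J}$.

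The key reformulation is that a solution $\{\tilde x_j\}_{j\in J}$ in $A$ is the same thing as a $\mathbb Z_p$-linear map $\phi\colon V\to A$ with $\phi(x_j)=\tilde x_j$: the equation $m_i=a_i$ holds precisely when $\phi(\bar m_i)=a_i$. So it suffices to produce a linear map $\phi\colon V\to A$ satisfying $\phi(\bar m_i)=a_i$ for all $i\in I$, and then set $\tilde x_j:=\phi(x_j)$.

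Now I would bring in $p$-nonsingularity. Because a subset of a vector space over a field is linearly independent exactly when each of its finite subsets is, the hypothesis that every finite subsystem is $p$-nonsingular says precisely that the family $\{\bar m_i\}_{i\in I}$ is linearly independent in $V$. In particular the $\bar m_i$ are pairwise distinct, so the rule $\bar m_i\mapsto a_i$ is a well-defined assignment on this independent family. Extending $\{\bar m_i\}_{i\in I}$ to a basis of $V$ and sending every newly added basis vector to $0$, I obtain a linear map $\phi\colon V\to A$ with $\phi(\bar m_i)=a_i$ for all $i$, which finishes the proof.

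The only non-elementary ingredient, and hence the main (though mild) obstacle, is the existence of such an extension to a basis: this is the standard fact that every linearly independent subset of a vector space extends to a basis, which for infinite index sets $I$ and $J$ rests on Zorn's lemma. Everything else is a routine translation between solutions of the system and linear maps out of $V$, made possible entirely by $A$ having prime period $p$.
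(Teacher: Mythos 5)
Your proposal is correct and follows essentially the same route as the paper's own proof: view $A$ as a $\mathbb Z_p$-vector space, reduce the exponent-sum rows modulo $p$, observe that a solution is exactly a linear map sending each reduced row $\bar m_i$ to $a_i$, and construct such a map by extending the linearly independent family $\{\bar m_i\}$ to a basis. Your explicit remarks that linear independence is a finitary condition (so $p$-nonsingularity of finite subsystems suffices) and that the basis extension uses Zorn's lemma are points the paper leaves implicit, but the argument is the same.
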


\begin{proof}
Consider a $p$-nonsingular system
$\{m_i = a_i\}_{i \in I}$ of equations in variables
$\{x_j\}_{j \in J}$ over $A$,
where $m_i$ is an integer linear combination of the variables $\{x_j\}$
(so $m_i$ is an element of the free $\mathbb Z$-module
$\sum\limits_{j \in J} \mathbb Z \cdot x_j$),
and $a_i$ is an element of $A$.

Note that the abelian group $A$ of period $p$ is a
vector space over $\mathbb Z_p$
(and a $\mathbb Z$-module as well),
whereas a solution of the system $\{m_i=a_i\}$ in $A$ is
a homomorphism of $\mathbb Z$-modules
$f \colon \sum\limits_{j \in J} \mathbb Z \cdot x_j \to A$
such that $f(m_i)=a_i$.

Any such homomorphism passes through the $\mathbb Z$-module homomorphism
$r_p\colon\sum\limits_{j \in J} \mathbb Z \cdot x_j \to \sum\limits_{j \in J}\mathbb Z_p \cdot x_j$,
under which integer coefficients are taken modulo $p$.
Therefore, we need to find a linear map
$\tilde f \colon \sum\mathbb Z_p \cdot x_j \to A$
such that $\tilde f (\tilde m_j) = a_j$,
where $\tilde m_j=r_p(m_j)$.

Now we construct the mapping $\tilde f$. Note that vectors
$\{\tilde m_i\}$ are linearly independent over $\mathbb Z_p$ by the hypotheses of the proposition
(as $m_i$ are the exponent sums rows of variables
$\{x_j\}$ in $w_i$, and these rows are linearly independent
modulo $p$), that is why there is a set
$\{v_k\}_{k \in K}$ of vectors
such that $\{\tilde m_i,v_k\}$ is a basis of
$\sum\mathbb Z_p \cdot x_j$.
Now define $\tilde f$ on
$\{\tilde m_i\}\cup \{v_k\}$ so that
$\tilde f(\tilde m_i)=a_i$ and $\tilde f(v_k)$ are arbitrary
(e.g. $\tilde f(v_k)=0$).
We can now define $\tilde f$ on $\sum\mathbb Z_p \cdot x_j$
as a linear map, and this is the desired linear map.

So, we get a solution of $\{m_i=a_i\}$ in $A$.
\end{proof}

\begin{prop}\label{pgood}
Any $p$-nonsingular system of equations
over an abelian $p$-group $A$ of bounded period
is solvable in $A$ itself.
\end{prop}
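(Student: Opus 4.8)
The plan is to induct on the exponent. Since $A$ is a $p$-group of bounded period, write the bound as $p^n$, so that $p^nA=\{0\}$, and induct on $n$. The base case $n\leqslant 1$ is precisely Proposition \ref{pvecgood}: there $A$ is an abelian group of period dividing $p$, i.e. a vector space over $\mathbb Z_p$, so the statement is already known. For the inductive step I would split the problem over $A$ into a problem over the quotient $A/pA$, which has period dividing $p$, together with a correction living in the subgroup $pA$, whose period is bounded by $p^{n-1}$.

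Concretely, write the system as $\{m_i=a_i\}_{i\in I}$ in variables $\{x_j\}_{j\in J}$, where $m_i$ lies in the free $\mathbb Z$-module $M=\sum_{j\in J}\mathbb Z\cdot x_j$ and $a_i\in A$; a solution is a homomorphism $f\colon M\to A$ with $f(m_i)=a_i$. First I pass to $\bar A=A/pA$. The reduced constants $\bar a_i$ give a system over $\bar A$ with the same exponent-sum matrix, hence again $p$-nonsingular, and $\bar A$ is a $\mathbb Z_p$-vector space; so Proposition \ref{pvecgood} produces a homomorphism $\bar f\colon M\to\bar A$ with $\bar f(m_i)=\bar a_i$ for all $i$.

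Next I lift and correct. Because $M$ is a free $\mathbb Z$-module, the surjection $A\to\bar A$ lets me lift $\bar f$ to a homomorphism $f_0\colon M\to A$ simply by choosing preimages of the elements $\bar f(x_j)$. Then $b_i:=a_i-f_0(m_i)$ lies in $pA$ for every $i$, since $f_0$ reduces to $\bar f$ modulo $pA$. The system $\{m_i=b_i\}_{i\in I}$ has the same matrix, so it is still $p$-nonsingular, and it is now a system over $pA$, a $p$-group of period at most $p^{n-1}$. By the induction hypothesis it has a solution $g\colon M\to pA$ with $g(m_i)=b_i$, and then $f:=f_0+g$ satisfies $f(m_i)=f_0(m_i)+b_i=a_i$, which is the desired solution in $A$.

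I expect no serious obstacle here; the one thing to verify carefully is that each ingredient genuinely applies: that $A/pA$ has period dividing $p$ so that Proposition \ref{pvecgood} is available, that the freeness of $M$ makes the lift $f_0$ automatic, and that $pA$ has strictly smaller exponent so that the induction terminates. It is worth noting that, pleasantly, this argument never invokes the structure theorem for bounded abelian groups: it uses only the filtration $A\supset pA\supset p^2A\supset\cdots$ and the projectivity of the free module $M$.
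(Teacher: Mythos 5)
Your proposal is correct and follows essentially the same route as the paper: induction on the exponent $p^n$, with Proposition \ref{pvecgood} as the base case, solving the reduced system over $A/pA$, lifting a solution, and correcting by a solution of the residual system over $pA$ obtained from the induction hypothesis. The only difference is presentational --- you phrase the lift-and-correct step in the language of module homomorphisms $f_0+g$, where the paper performs the equivalent substitution $x_j\mapsto x_j+c_j$ on the variables.
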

\begin{proof}
Let the period of $A$ be $p^n$.
Use induction on $n$.

The base case ($n=1$) is Proposition \ref{pvecgood}.

Now show that if the statement holds
for $n=l$, then it holds for $n=l+1$.
Consider a $p$-nonsingular system
$\{m_i = a_i\}_{i \in I}$ of equations in variables
$\{x_j\}_{j \in J}$ over $A$.

Look at the quotient group $A/pA$.
Take the induced system $\{m_i=\overline{a}_i\}_{i\in I}$
of equations over $A/pA$, where $\overline{a}_i$ is the image of $a_i$
under taking the quotient $A \to A/pA$.
It is a $p$-nonsingular system of equations over the abelian group $A/pA$
of period $p$, therefore, by Proposition \ref{pvecgood}
the system has a solution $\{\bar c_j\}_{j\in J}$ in $A/pA$.

For each $\bar c_j \in A/pA$ pick an element
$c_j \in A$ of its preimage.
Then after the substitution
$x_j \mapsto x_j+c_j$ the system
$\{m_i = a_i\}$ transforms into
$\{m_i = b_i\}$, where $b_i \in pA$,
since the induced system $\{m_i=\overline{a}_i\}$
over $A/pA$
transforms into
$\{m_i=1\}$.

The system $\{m_i = b_i\}$ can be seen as a system of equations over
$pA$, which is an abelian group of period $p^l$.
This system has a solution in $pA$ by induction hypothesis.
This solution is also a solution of the system in $A \supset pA$,
as needed.
\end{proof}

Now we can move on to arbitrary groups of bounded period.

\begin{lemma}\label{good}
Suppose that $A$ is an abelian group of bounded period.
Then every unimodular system of equations over $A$
is solvable in $A$ itself.
\end{lemma}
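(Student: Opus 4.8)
The plan is to reduce the case of an arbitrary abelian group of bounded period to the case of a $p$-group of bounded period, which is already settled by Proposition \ref{pgood}. The key structural fact is that an abelian group of bounded period decomposes as a direct sum of its $p$-components, and a group of \emph{bounded} period has only finitely many nonzero $p$-components: if the period is $p_1^{n_1}\cdots p_r^{n_r}$, then $A = A_{p_1}\oplus\cdots\oplus A_{p_r}$, where each $A_{p_s}$ is an abelian $p_s$-group of bounded period $p_s^{n_s}$.

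First I would take a unimodular system $\{m_i=a_i\}_{i\in I}$ over $A$ and project it onto each component. Writing $a_i = \sum_{s=1}^r a_i^{(s)}$ with $a_i^{(s)}\in A_{p_s}$, a solution $\{\tilde x_j\}$ with $\tilde x_j = \sum_s \tilde x_j^{(s)}$ exists in $A$ if and only if, for each $s$, the projected system $\{m_i = a_i^{(s)}\}_{i\in I}$ has a solution in $A_{p_s}$. This equivalence is exactly where the finiteness of the number of components is used: since $r$ is finite, assembling componentwise solutions into a genuine element of the direct sum $A$ poses no difficulty (each $\tilde x_j$ has only finitely many nonzero components by construction). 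This is the crucial contrast with Lemma \ref{bad}, where infinitely many components forced the putative solution out of the direct sum.

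Next I would verify that each projected system remains solvable over the component. A unimodular system is in particular $p_s$-nonsingular for every $s$ (unimodularity means $\Pi$-nonsingularity). Hence $\{m_i = a_i^{(s)}\}_{i\in I}$ is a $p_s$-nonsingular system of equations over the abelian $p_s$-group $A_{p_s}$ of bounded period, and Proposition \ref{pgood} guarantees a solution $\{\tilde x_j^{(s)}\}_{j\in J}$ in $A_{p_s}$ itself.

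The main obstacle, such as it is, lies in making the componentwise-to-global reduction rigorous rather than in any hard computation: one must check that the direct-sum decomposition is compatible with the $\mathbb{Z}$-module structure of the equations (so that projecting and solving componentwise is legitimate) and that, because there are only finitely many components, the assembled tuple $\tilde x_j = \sum_{s=1}^r \tilde x_j^{(s)}$ is a bona fide element of $A$. Once this bookkeeping is in place, setting $\tilde x_j = \sum_{s=1}^r \tilde x_j^{(s)}$ yields a solution of the original system in $A$, completing the proof.
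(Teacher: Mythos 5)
Your proposal is correct and follows essentially the same route as the paper: decompose $A$ into its finitely many primary components $A_{p_1}\oplus\cdots\oplus A_{p_k}$, apply Proposition \ref{pgood} to each component (using that a unimodular system is $p_s$-nonsingular), and reassemble the componentwise solutions. The paper states the final reassembly step in one line ("therefore, it is also true for their direct sum"); your write-up merely makes that bookkeeping explicit.
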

\begin{proof}
Suppose that $p_1, p_2,\ldots, p_k$
are all the prime divisors of the period of $A$.
Then the following decomposition holds:
$$
A = A_{p_1}
\oplus A_{p_2} \oplus \ldots \oplus A_{p_k},
$$
where $A_{p_i}$ is the $p_i$-component of $A$
(which has bounded period $p_i^{n_i}$).
For the components $A_{p_i}$
the statement of the lemma follows
from Proposition \ref{pgood},
therefore, it is also true for their direct sum.
\end{proof}

\begin{rem}
In fact, the system does not need to be precisely unimodular
as long as it is $\{p_1,\ldots,p_k\}$-nonsingular
(since it suffices for system to be $p_i$-nonsingular
for each prime $p_i$ which divides the period of the group).
\end{rem}
\begin{rem}
In particular, a unimodular (not necessarily finite) system of equations
over a finite abelian group is always solvable in that group itself.
This is also true for $\{p_1,p_2,\ldots,p_k\}$-nonsingular systems,
where $p_1,\ldots,p_k$ are all the prime divisors of the order of the abelian group.
\end{rem}

\subsection{The final criterion}

Now we can prove the criterion formulated in the introduction.

\begin{proof}[Proof of Theorem \ref{Criteria}]
Decompose $A$ into a direct sum of a divisible abelian group
and a reduced one.
The divisible abelian group has a solution of every
nonsingular system of equations over itself (by Lemma \ref{divisible}).
Therefore, nonsingular (particularly, unimodular)
systems of equations have a solution in $A$
if and only if
the same is true for its reduced part.
Hence we can assume $A$ to be reduced.

By Lemma \ref{bad}, if any unimodular system of equations
over $A$ is solvable in $A$ itself, then $A$ must have bounded period.

And if $A$ has bounded period, then any unimodular system of equations over $A$
has a solution in $A$ by Lemma \ref{good}.
\end{proof}
\begin{rem}
An analogous criterion can be formulated for abelian $\pi$-groups
(a $\pi$-group is a group, elements of which have finite orders whose
every prime divisor lies in $\pi$):
\begin{quote}
Suppose that $A$ is an abelian $\pi$-group, where
$\pi$ is a set of some prime numbers.
Then every $\pi$-nonsingular system of equations over
$A$ is solvable in $A$
if and only if
the reduced part of
$A$ has bounded period.
\end{quote}
\end{rem}

\section{Nilpotent groups} \label{NilpCase}

Now we switch back to the multiplicative notation.
The results of Section \ref{AbCase} can be used to research
solvability of systems of equations in nilpotent groups.

\subsection{Periodic nilpotent groups}

\begin{proof}[Proof of Theorem \ref{theorembound}]
Denote the nilpotency class of $G$ by $s$.
We argue by induction on $s$.

The base case ($s=1$) follows from Theorem \ref{Criteria}.

Now we show that if the statement is true for
$s=l$,
then it holds for $s=l+1$ as well.
Consider a unimodular system $\{w_i=1\}_{i\in I}$
of equations in variables $\{x_j\}_{j \in J}$ over $G$.

Take a look at the system $\{\overline{w}_i=1\}_{i\in I}$ over $G/Z(G)$,
where for each $i \in I$ the word $\overline{w}_i$ is
the word $w_i$ with coefficients changed to their images
under taking the quotient $G \to G/Z(G)$.
The system $\{\overline{w}_i=1\}$ is a unimodular system of equations
over $G/Z(G)$ (which is a nilpotent group of class $l$
and of bounded period),
therefore, the system is solvable in the quotient group
$G/Z(G)$ itself by induction hypothesis.

This means that we can make a substitution of the variables
in the system $\{w_i=1\}$ over $G$
such that for each $i \in I$ product $b_i$ of coefficients of the word $w_i$
lies in $Z(G)$
(i.e. the image of $b_i$ under taking the quotient $G \to G/Z(G)$ is $1$).
So we can assume that each word $w_i$ has this property initially
(that is, for every $w_i$, the product of the coefficients of $w_i$ lies in $Z(G)$).

Search the solution $\{\tilde x_j\}$ among the elements of $Z(G)$.
Finding a solution of $\{w_i=1\}$, in which all $\tilde x_j$ are
from $Z(G)$, is the same as finding a solution of $\{v_ib_i=1\}_{i \in I}$
in $Z(G)$, where for each $i \in I$ the word $v_i$ is $w_i$ with coefficients removed.
The latter system can be seen as a system of equations over $Z(G)$,
thus (since the system is unimodular and $Z(G)$ has bounded period)
it has a solution in $Z(G)$ by Theorem \ref{Criteria}.
This solution is also a solution of $\{w_i=1\}$, as needed.
\end{proof}

\begin{rem}
Suppose that all the prime divisors of the period
of the nilpotent group $G$ are
$p_1,p_2,\ldots,p_k$. Then any
$\{p_1,p_2,\ldots,p_k\}$-nonsingular system of equations
over $G$ is solvable in $G$ as well.

For abelian groups it follows from the fact that
the periods of primary components
in the corresponding decomposition of such group
can only be powers of prime numbers from $\{p_1,p_2,\ldots,p_n\}$.

For nilpotent groups we use the same argument as in the proof above.

In particular, an infinite-system analogue of Shmel'kin's
theorem (more precisely, of its existence part)
holds for finite $p$-groups.
\end{rem} 

For finite nilpotent groups we can prove solvability
of infinite unimodular systems of equations
in another way:
\begin{proof}[Alternative proof]
Let $V$ be a unimodular system of equations over a finite nilpotent group $G$.
For any finite subsystem $W\subset V$ consider the copy
$G_W$ of $G$, which, by Shmel'kin's theorem,
has a solution $\{x_{W,j}\}_{j \in J}$ of $W$.

Now look at the set $M$ of all finite subsystems of $V$.
Consider the following family of sets:
by $S_W$ denote a set of all such finite systems
$\tilde W \subset V$ that $W \subset \tilde W$.
Note that $\{S_W\}$ has the finite intersection property.
Indeed, the finite intersection
$S_{W_1} \cap \ldots \cap S_{W_n}$
contains non-empty set $S_{W_1 \cup \ldots \cup W_n}$
as a subset.
Therefore, we can complete $\{S_W\}$ to an ultrafilter $\mathcal U$.

Now consider the ultraproduct
$\left( \prod\limits_{W \in M} G_W \right) \Bigl/ \mathcal U$
(in other words, the ultrapower of $G$). Embed $G$ into this ultrapower diagonally.
The set $\{ \tilde x_j\}_{j\in J}$, where $\tilde x_j$ is the element
of the ultraproduct
whose $G_W$-component is $x_{W,j}$ for each $W \in M$,
forms a solution of $V$.
Indeed, denote by $\tilde w_i$ the word $w_i$ after the substitution of variables by
the elements $\{\tilde x_j\}$ of the ultraproduct.
Then the set of indices of coordinates of $\tilde w_i$, which are equal to $1$,
contains $S_{\{w_i=1\}}$ as a subset, and so belongs the the ultrafilter.
Hence $\tilde w_i=1$ in the ultraproduct itself, and this holds for each $i \in I$.

But this ultraproduct is identical to $G$, as the order of an
ultrapower of a finite group $G$ is not greater than $|G|$
\cite[chapter IV, \S 8, item 8.5]{Mal70}.
So the infinite unimodular system $V$
has a solution in $G$ itself.
\end{proof}

\subsection{Divisible nilpotent groups}

Before we prove the result concerning divisible nilpotent groups,
let us prove the following lemma.

\begin{lemma}\label{divisiblenilp}
Suppose that $G$ is a divisible nilpotent group
of class $s$.
Then for every $i\in \{1,\ldots,s\}$ the term
$Z_i(G)$ of upper central series of $G$
is divisible as well.
\end{lemma}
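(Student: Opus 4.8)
The plan is to argue by induction on the nilpotency class $s$, reducing the whole statement to the single assertion that the center $Z(G)=Z_1(G)$ of a divisible nilpotent group is divisible. Two routine ingredients make this reduction work. First, a \emph{central extension lemma}: if $N$ is a central divisible subgroup of a group $A$ and $A/N$ is divisible, then $A$ is divisible (given $a\in A$ and $n$, lift an $n$-th root of the image of $a$ to $c$, so that $a=c^n m$ with $m\in N$, choose $m'\in N$ with $m'^n=m$, and note $(cm')^n=c^n m'^n=a$ since $m'$ is central). Second, the standard shift identity $Z_i(G)/Z(G)=Z_{i-1}(G/Z(G))$. Granting that centers of divisible nilpotent groups are divisible, the inductive step runs as follows: $G/Z(G)$ is again divisible nilpotent but of class $s-1$, so by the induction hypothesis every $Z_{i-1}(G/Z(G))=Z_i(G)/Z(G)$ is divisible; since $Z(G)$ is central in $Z_i(G)$ and divisible, the central extension lemma yields that $Z_i(G)$ is divisible.

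The crux is therefore to prove that $Z(G)$ is divisible, and here is where the bulk of the work lies. It suffices to produce, for each prime $p$ and each $z\in Z(G)$, a \emph{central} $p$-th root, since for an abelian group $p$-divisibility for all $p$ is the same as divisibility. First I would record that the last nontrivial term $\gamma_s=\gamma_s(G)$ of the lower central series is divisible: it is central, abelian, and generated by weight-$s$ commutators $[x_1,\dots,x_s]$, which are multilinear modulo $\gamma_{s+1}=1$; hence $[y^p,x_2,\dots,x_s]=[y,x_2,\dots,x_s]^p$, so every generator, and thus every element, of $\gamma_s$ is a $p$-th power. Now pass to $\bar G=G/\gamma_s$, which is divisible nilpotent of class $s-1$; by the induction hypothesis its center is divisible, so the image of $z$ has a central $p$-th root $\bar w$ in $\bar G$. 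Lift it to $w_0\in G$, so that $[w_0,G]\subseteq\gamma_s\subseteq Z(G)$ and $w_0^p=zu$ with $u\in\gamma_s$. The decisive observation is that $w_0$ is automatically central: the map $x\mapsto[w_0,x]$ is a homomorphism $G\to Z(G)$, which factors through the divisible group $G^{\mathrm{ab}}$ and whose image has exponent dividing $p$ (because $[w_0,x]^p=[w_0^p,x]=[zu,x]=1$, using that $[w_0,x]$ is central); since a divisible group of exponent $p$ is trivial, this homomorphism vanishes. Finally, as $w_0\in Z(G)$ and $\gamma_s$ is divisible, choosing $v\in\gamma_s$ with $v^p=u^{-1}$ gives the central $p$-th root $w=w_0v$ with $w^p=z$.

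The main obstacle is exactly this last argument for $Z(G)$: the naive idea of lifting a root from a quotient fails on its own, because modifying $w_0$ by central elements cannot change its commutators, so one cannot force centrality by hand. What saves the situation is the exponent-$p$/divisibility dichotomy, which forces the lifted root to be central for free; the remaining correction then lives inside the already-divisible subgroup $\gamma_s$. By comparison, the central extension lemma, the divisibility of $\gamma_s$, and the shift identity for the upper central series are all routine, and the base case $s=1$ (where $G$ is divisible abelian and $Z_1(G)=G$) is immediate.
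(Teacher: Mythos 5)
Your proof is correct, but it takes a genuinely different route from the paper's. The paper argues directly: given $z\in Z_i(G)$ and \emph{any} $n$-th root $w\in G$ of $z$, a commutator computation (using $[w,h^n]\equiv[w,h]^n\equiv[w^n,h]=[z,h]$ modulo lower terms of the upper central series) shows that $w\in Z_j(G)$ with $j>i$ forces $w\in Z_{j-1}(G)$; descending from $Z_s(G)=G$ gives $w\in Z_i(G)$. This single computation proves more than divisibility --- it shows each $Z_i(G)$ is an \emph{isolated} subgroup (every root of every element already lies in it), and it visibly uses only $n$-th root extraction in $G$, which is why the paper can record the $\pi$-divisibility generalization as an immediate remark. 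Your argument instead inducts on the nilpotency class, reduces everything to divisibility of the center via the central-extension lemma and the shift identity $Z_i(G)/Z(G)=Z_{i-1}(G/Z(G))$, and then handles the center by a combination of three ingredients the paper never touches: divisibility of $\gamma_s(G)$ via multilinearity of weight-$s$ commutators, lifting a root from $G/\gamma_s$, and the dichotomy that a divisible group of exponent $p$ is trivial (applied to the image of the homomorphism $x\mapsto[w_0,x]$ factoring through $G^{\mathrm{ab}}$), which elegantly forces the lifted root to be central. All steps check out: the centrality of $[w_0,x]$ makes $x\mapsto[w_0,x]$ a homomorphism and gives $[w_0,x]^p=[w_0^p,x]=1$, and the final correction by an element of the divisible group $\gamma_s$ is legitimate. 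What you lose relative to the paper is the isolation property (you construct \emph{a} root in $Z_i(G)$ rather than showing all roots land there), and the $\pi$-divisible variant requires a prime-by-prime restatement rather than following verbatim; what you gain is a structural proof assembled from standard facts about central series and divisible abelian groups, with no bare-hands commutator collection.
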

\begin{proof}
Let us adopt the following convention:
the commutator $[g,h]$ denotes $g^{-1}h^{-1}gh$.

Let $z$ be an element of $Z_i(G)$ and $n$ be a natural number.

Since $G$ is divisible, there is such an element $w\in G$ that $w^n=z$
Now we prove that $w \in Z_i(G)$.
Let $g \in G$. There is an element $h \in G$ such that $h^n = g$.

Consider the commutator $[w,g]=[w,h^n]$.
Assume that it is already shown that $w \in Z_j(G)$.
This is true at least for $j=s$, as $Z_s (G) = G$.
Using the commutator indentities
$$
[a,bc]=[a,c][a,b][[a,b],c];\quad [ab,c]=[a,c][[a,c],b][b,c],
$$
we get that
$$
[w,h^n] = [w,h^{n-1}] [w,h] [[w,h],h^{n-1}].
$$
Now notice that as $w \in Z_j(G)$,
$[w,h_1] \in Z_{j-1}(G)$ and $[[w,h_1],h_2] \in Z_{j-2}(G)$
for all $h_1,h_2 \in G$.
This means that $[w,h^n] = [w,h^{n-1}][w,h]$ modulo $Z_{j-2}(G)$.
Dealing with $[w,h^{n-1}]$ in the same way and so on,
we receive that $[w,h^n] = [w,h]^n$ modulo $Z_{j-2}(G)$.

Likewise we get that $[w^n,h]=[w,h]^n$
modulo $Z_{j-2}(G)$.
So,
$[w,g] = [w,h^n] = [w,h]^n = [w^n,h] = [z,h]$
modulo $Z_{j-2}(G)$.
As $z \in Z_{i}(G)$,
$[z,h] \in Z_{i-1}(G)$.
Hence,
$[w,g] \in Z_{i-1}(G)Z_{j-2}(G)$.
If $j>i$, then
$[w,g] \in Z_{j-2}(G)$ for any $g \in G$.
So the element $w$ itself lies in $Z_{j-1}(G)$
(since it commutes modulo $Z_{j-2}(G)$
with any element of $G$).

As a result, we proved that if $w \in Z_j(G)$ and $j>i$,
then $w \in Z_{j-1}(G)$.
It follows that \\
$w \in Z_{s-1}(G),w \in Z_{s-2}(G),\ldots,
w \in Z_i(G)$, hence the statement of the lemma.
\end{proof}

\begin{rem}
For instance, the center of a divisible nilpotent group is divisible as well.
\end{rem}

\begin{rem}
In fact, we even prove that all terms of the upper central series
of a divisible nilpotent group $G$ are isolated subgroups
(i.e. any $n$th root, where $n$ is arbitrary, of any element of $Z_i(G)$
lies in $Z_i(G)$ as well).
\end{rem}

\begin{rem}
Notice that for $n$th root extraction in $Z_i(G)$
we needed only $n$th root extraction in $G$.
Therefore, the analogous fact for $\pi$-divisibility
(particularly, for $p$-divisibility) also holds:
\begin{quote}
Suppose that $G$ is a $\pi$-divisible nilpotent group.
Then each of its upper central ceries terms is
$\pi$-divisible as well.
\end{quote}
\end{rem}

\begin{proof}[Proof of Theorem \ref{theoremdiv}]
Denote the nilpotency class of $G$ by $s$.
We argue by induction on $s$.

The base case ($s=1$) follows from Lemma \ref{divisible}.

Now we show that if the statement is true for
$s=l$,
then it holds for $s=l+1$ as well.
Consider a nonsingular system $\{w_i=1\}_{i\in I}$
of equations in variables $\{x_j\}_{j \in J}$ over $G$.

Take a look at the system $\{\overline{w}_i=1\}_{i\in I}$ over $G/Z(G)$,
where for each $i \in I$ the word $\overline{w}_i$ is
the word $w_i$ with coefficients changed to their images
under taking the quotient $G \to G/Z(G)$.
The system $\{\overline{w}_i=1\}$ is a nonsingular system of equations
over $G/Z(G)$ (which is a divisible nilpotent group of class $l$),
therefore, the system is solvable in the quotient group
$G/Z(G)$ itself by induction hypothesis.

This means that we can make a substitution of the variables
in the system $\{w_i=1\}$ over $G$
such that for each $i \in I$ the product $b_i$ of coefficients of the word $w_i$
lies in $Z(G)$
(i.e. the image of $b_i$ under taking the quotient $G \to G/Z(G)$ is $1$).
So we can assume that each word $w_i$ has this property initially
(that is, for every $w_i$, the product of the coefficients of $w_i$ lies in $Z(G)$).

Search the solution $\{\tilde x_j\}$ among the elements of $Z(G)$.
Finding a solution of $\{w_i=1\}$, in which all $\tilde x_j$ are
from $Z(G)$, is the same as finding a solution of $\{v_ib_i=1\}_{i \in I}$
in $Z(G)$, where for each $i \in I$ the word $v_i$ is $w_i$ with coefficients removed.
The latter system can be seen as a system of equations over $Z(G)$,
thus (since the system is nonsingular and $Z(G)$ is divisible)
it has a solution in $Z(G)$.
This solution is also a solution of $\{w_i=1\}$, as needed.
\end{proof}

\section{Open questions} \label{OpenQuest}

\begin{question} \label{torsionfreebad}
Which torsion-free abelian groups contain solutions
of every unimodular system of equations over itself?
The group $\mathbb Z$ fails to have this property, as the following
system of equations (in the additive notation) shows.
$$
\begin{cases}
\ldots,\\
2y_2 - y_1 = 0,\\
2y_1 - x = 1,\\
x - 3z_1 = 0,\\
z_1 - 3z_2 = 0,\\
\ldots
\end{cases}
$$
\end{question}

\begin{question}
The same question for mixed abelian groups, i.e.
abelian groups which have
both elements of infinite order and
nontrivial elements of finite order.
\end{question}

\begin{question}
Can a criterion similar to Theorem
\ref{Criteria} be derived for nilpotent groups?
\end{question}

\end{document}